\newcommand{\bee}{\begin{enumerate}}
\newcommand{\eee}{\end{enumerate}}
\newcommand{\benn}{\begin{equation*}}
\newcommand{\eenn}{\end{equation*}}
\newcommand{\be}{\begin{equation}}
\newcommand{\ee}{\end{equation}}
\newcommand{\bean}{\begin{eqnarray}}
\newcommand{\eean}{\end{eqnarray}}
\newcommand{\bea}{\begin{eqnarray*}}
\newcommand{\eea}{\end{eqnarray*}}
\newcommand{\w}{\wedge}
\newcommand{\p}{\partial}
\newcommand{\la}{\langle}
\newcommand{\ra}{\rangle}
\newcommand{\raa}{\rightarrow}
\newcommand{\Ci}{C^{\infty}}
\newcommand{\N}{\mathbb{N}}
\newcommand{\Z}{\mathbb{Z}}
\newcommand{\R}{\mathbb{R}}
\newcommand{\Q}{\mathbb{Q}}
\newcommand{\lp}{\left(}
\newcommand{\rp}{\right)}
\newcommand{\op}[1]{\!\!\mathop{\rm ~#1}\nolimits}
\mathchardef\za="710B  %\alpha
\mathchardef\zb="710C  %\beta
\mathchardef\zg="710D  %\gamma
\mathchardef\zd="710E  %\delta
\mathchardef\zve="710F %\epsilon
\mathchardef\zz="7110  %\zeta
\mathchardef\zh="7111  %\eta
\mathchardef\zy="7112 %\theta
\mathchardef\zi="7113  %\iota
\mathchardef\zk="7114  %\kappa
\mathchardef\zl="7115  %\lambda
\mathchardef\zm="7116  %\mu
\mathchardef\zn="7117  %\nu
\mathchardef\zx="7118  %\xi
\mathchardef\zp="7119  %\pi
\mathchardef\zr="711A  %\rho
\mathchardef\zs="711B  %\sigma
\mathchardef\zt="711C  %\tau
\mathchardef\zu="711D  %\upsilon
\mathchardef\zf="711E %\phi
\mathchardef\zq="711F  %\chi
\mathchardef\zc="7120  %\psi
\mathchardef\zw="7121  %\omega
\mathchardef\ze="7122  %\varepsilon
\mathchardef\zvy="7123  %\vartheta
\mathchardef\zvw="7124  %\varomega
\mathchardef\zvr="7125 %\varrho
\mathchardef\zvs="7126 %\varsigma
\mathchardef\zvf="7127  %\varphi
\mathchardef\zG="7000  %\Gamma
\mathchardef\zD="7001  %\Delta
\mathchardef\zY="7002  %\Theta
\mathchardef\zL="7003  %\Lambda
\mathchardef\zX="7004  %\Xi
\mathchardef\zP="7005  %\Pi
\mathchardef\zS="7006  %\Sigma
\mathchardef\zU="7007  %\Upsilon
\mathchardef\zF="7008  %\Phi
\mathchardef\zW="700A  %\Omega
\newcommand{\cyclic}{\mathop{\kern0.9ex{{+}
\kern-2.15ex\raise-.25ex\hbox{\Large\hbox{$\circlearrowright$}}}}\limits}
 \newcommand{\cA}{{\cal A}}
 \newcommand{\cM}{{\cal M}}
\newtheorem{theo}{Theorem}
\newtheorem{prop}{Proposition}
\newtheorem{lem}{Lemma}
\newtheorem{cor}{Corollary}
\newtheorem{defi}{Definition}
\begin{document}
\title{{\bf On the category of Lie $n$-algebroids}}
\date{}
\author{Giuseppe Bonavolont\`a and Norbert Poncin\thanks{G. Bonavolont\`a thanks the Luxembourgian National Research Fund for support via AFR grant 09-072. The research of N. Poncin was supported by Grant
GeoAlgPhys 2011-2013 awarded by the University of Luxembourg. Moreover, N. Poncin thanks the Institute of Mathematics of the Polish Academy in Warsaw and especially J. Grabowski for an invitation to a research stay during which part of this work was done.}}

\maketitle

\newcommand{\fg}{{\frak g}}
\newcommand{\fh}{{\frak h}}
\newcommand{\mk}{{\mathds{k}}}
\newcommand{\0}{\otimes}

\begin{abstract} Lie $n$-algebroids and Lie infinity algebroids are usually thought of exclusively in supergeometric or algebraic terms. In this work, we apply the higher derived brackets construction to obtain a geometric description of Lie $n$-algebroids by means of brackets and anchors. Moreover, we provide a geometric description of morphisms of Lie $n$-algebroids over different bases, give an explicit formula for the Chevalley-Eilenberg differential of a Lie $n$-algebroid, compare the categories of Lie $n$-algebroids and NQ-manifolds, and prove some conjectures of Sheng and Zhu \cite{SZ11}.\end{abstract}\vspace{5mm}

{\small %\noindent{\bf Mathematics Subject Classification (2000) :}

\noindent{\bf Keywords :} Lie $n$-algebroids, split NQ-manifolds, morphisms, Chevalley-Eilenberg complex, graded symmetric tensor coalgebra, higher derived brackets, Lie infinity (anti)-algebra..}

\tableofcontents

\section{Introduction}

\subsection{General background}

The starting point of this work is the paper \cite{BKS} by Bojowald, Kotov, and Strobl. The authors prove that for the Poisson sigma model ({\small PSM}) a bundle map is a solution of the field equations if and only if it is a map of Lie algebroids, i.e. a morphism of Q-manifolds, or, as well, of differential graded algebras ({\small DGA}). Moreover, gauge equivalent solutions are homotopic maps of Lie algebroids or homotopic morphisms of Q-manifolds.\medskip

In case of the AKSZ sigma model, the target space is a symplectic Lie $n$-algebroid \cite{AKSZ97}, see also \cite{Sev01}. The concept of Lie $n$-algebroid or, more generally, of Lie infinity algebroid can be discussed in the cohesive $(\infty,1)$-topos of synthetic differential $\infty$-groupoids. Presentations by {\small DGA}-s and simplicial presheaves can be given. However, in contrast with Lie algebroids, no interpretation in terms of brackets seems to exist -- the approach is essentially algebraic.\medskip

Lie infinity algebroids are not only homotopifications of Lie algebroids, but also horizontal categorifications of Lie infinity algebras. Truncated Lie infinity algebras are themselves tightly connected with vertical categorifications of Lie algebras \cite{BC04}, \cite{KMP11}. Beyond this categorical approach to Lie infinity algebras, there exists a well-known operadic definition that has the advantage to be valid also for other types of algebras: if $P$ denotes a quadratic Koszul operad, the $P_{\infty}$-operad is defined as the cobar construction $\zW P^{\text{\;!`}}$ of the Koszul-dual cooperad $P^{\;\text{!`}}$ of $P$. A $P_{\infty}$-structure on a graded vector space $V$ is then a representation on $V$ of the differential graded operad $P_{\infty}$. On the other hand, a celebrated result by Ginzburg and Kapranov states that $P_{\infty}$-structures on $V$ are 1:1 (in the finite-dimensional context) with differentials $$d\in
\op{Der}^1({P^{!}}(sV^*))\quad \text{ or }\quad d\in \op{Der}^{-1}({P^{!}}(s^{-1}V^*))\;,$$or, also, 1:1 with codifferentials
$$D\in\op{CoDer}^{1}(P^{\;\text{!`}}(s^{-2}V))\quad \text{
or }\quad D\in\op{CoDer}^{-1}({P^{\;\text{!`}}}(V))\;.$$Here $\op{Der}^1({P^{!}}(sV^*))$ (resp., $\op{CoDer}^{1}({P^{\;\text{!`}}}(s^{-2}V))$), for instance, denotes the space of endomorphisms of the free graded algebra over the Koszul dual operad $P^{!}$ of $P$ on the suspended linear dual $sV^*$ of $V$, which have degree 1 and are derivations with respect to each binary operation in $P^{!}$ (resp., the space of endomorphisms of the free graded coalgebra on the desuspended space $s^{-1}V$ that are coderivations) (by differential and codifferential we mean of course a derivation or coderivation that squares to 0). In case $P$ is the Lie operad, this implies that we have a 1:1 correspondence between Lie infinity structures on $V$ and degree 1 differentials $D\in \op{Der}^1(\odot (sV^*))$ of the free graded symmetric algebra over $sV^*$ or degree 1 codifferentials $D\in \op{CoDer}^1(\odot^c (s^{-1}V))$ of the free graded symmetric coalgeba over $s^{-1}V$. The latter description can also be formulated in terms of formal supergeometry (which goes back to Kontsevich's work on Deformation
Quantization of Poisson Manifolds): a Lie infinity structure on $V$ is the same concept as a homological vector field on the formal supermanifold $s^{-1}V$.\medskip

Each one of the preceding approaches to Lie infinity algebras has its advantages: a number of notions are God-given in the categorical setting, operadic techniques favor conceptual and `universal' ideas, morphisms tend to live in the algebraic or supergeometric world... However: Lie infinity algebras were originally defined by Lada and Stasheff \cite{LS93} in terms of infinitely many brackets. The same holds true for Lie algebroids: although they are exactly Q-manifolds, i.e. homological vector fields $Q\in \op{Der}^1(\zG(\w E^*))$ of a split supermanifold $E[1]$, where $E$ is a vector bundle, their original nature is geometric: they are vector bundles with an anchor map and a Lie bracket on sections that verifies the Leibniz rule with respect to the module structure of the space of sections. Moreover, in case of the {\small PSM}, Physics is formulated in this geometric setting.

\subsection{Main results and structure}

In this work, we describe Lie infinity algebroids and their morphisms in terms of anchors and brackets, thus providing a geometric meaning of concepts usually dealt with exclusively in the algebraic or supergeometric frameworks. \medskip

Section 2 begins with some information on graded symmetric and graded antisymmetric tensor algebras of $\Z$-graded modules over $\Z$-graded rings. Further, we prove that any N-manifold is noncanonically split, {\bf Theorem \ref{BatchelorNMan}}, and give examples of canonically split N-manifolds.\medskip

In Section 3, we study the standard and the homological gradings of the structure sheaf and the sheaf of vector fields of split N-manifolds, as well as the corresponding Euler fields. We also review graded symmetric tensor coalgebras, their coderivations and cohomomorphisms. We apply the higher derived brackets method to obtain from an NQ-manifold the geometric anchor-bracket description of a Lie $n$-algebroid, see {\bf Definition \ref{DefLieNAld}}. We thus recover and justify a definition given in \cite{SZ11}. There is a 1:1 correspondence between `geometric' Lie $n$-algebroids and NQ-manifolds, see {\bf Theorem \ref{MTheo2}}. The proof leads to the explicit form of the Chevalley-Eilenberg differential of a Lie $n$-algebroid, see {\bf Definition \ref{LieNAldCE}}. It reduces, for $n=1$, to the Lie algebroid de Rham cohomology operator, see {\bf Remark 8}. Moreover, it is shown that any Lie $n$-algebroid is induced by derived brackets. Some of these results may be considered as natural. However, their proofs are highly complex. Even the concept of `geometric' Lie $n$-algebroid was not known before 2011.\medskip

The last section contains the definitions of morphisms of `geometric' Lie $n$-algebroids over different and over isomorphic bases, see {\bf Definitions \ref{LadMorDef}, \ref{BasePresLieMorphDef}} and {\bf Remarks 9, 10}. For $n=1$, they reduce to Lie algebroid morphisms \cite{Mac05} and, over a point, we recover Lie infinity algebra morphisms \cite{AP10}. To justify these definitions, we prove that split Lie $n$-algebroid morphisms are exactly morphisms of NQ-manifolds between split NQ-manifolds, see {\bf Theorem \ref{LadMorTheo}}.

\section{N-manifolds}

\subsection{$\Z$-Graded symmetric tensor algebras}

The goal of this subsection is to increase readability of our text. The informed reader may skip it. For the $\Z_2$-graded case, see \cite{Man88}.\medskip

In the following, we denote by $\vee$ (resp., $\w$, $\odot$, $\boxdot$) the symmetric (resp., antisymmetric, graded symmetric, graded antisymmetric) tensor product. More precisely, let $M=\oplus_iM_i$ be a $\Z$-graded module over a $\Z$-graded commutative unital ring $R$. Graded symmetric (resp., graded antisymmetric) tensors on $M$ are defined, exactly as in the nongraded case, as the quotient of the tensor algebra $TM=\oplus_pM^{\0 p}$ by the ideal $$I=(m\0 n-(-1)^{mn}n\0 m)\quad (\text{resp., } I=(m\0 n+(-1)^{mn}n\0 m))\;,$$where $m,n\in M$ are homogeneous of degree denoted by $m,n$ as well. The tensor module $TM$ admits the following decomposition: $$TM=\bigoplus_{p\in\N}\bigoplus_{i_1\le\ldots\le i_p}M_{i_1\ldots i_p}:=\bigoplus_{p\in\N}\bigoplus_{i_1\le\ldots\le i_p}\lp \bigoplus_{\zs\in \op{Perm}} M_{\zs_{i_1}}\0\ldots\0 M_{\zs_{i_p}}\rp\;,$$where Perm denotes the set of all permutations (with repetitions) of the elements $i_1\le\ldots \le i_p.$ For instance, if $M=M_0\oplus M_1\oplus M_2$, the module $M^{\0 3}$ is given by $$M^{\0 3}=M_0\0 M_0\0 M_0\oplus \lp M_0\0 M_0\0 M_1\oplus M_0\0 M_1\0 M_0 \oplus M_1\0 M_0\0 M_0\rp\oplus\ldots$$The ideal $I$ is homogeneous, i.e. $$I=\bigoplus_{p\in\N}\bigoplus_{i_1\le\ldots\le i_p}\lp M_{i_1\ldots i_p} \cap I \rp\;,$$which actually means that the components of the decomposition of any element of $I\subset TM$ are elements of $I$ as well. To check homogeneity, it suffices to note that an element of $I$, e.g. $$(m_0\0 m_1-m_1\0 m_0)\0 (m'_0 + m'_1)\;,$$where the subscripts denote the degrees, has components $$(m_0\0 m_1\0 m'_0-m_1\0 m_0\0 m'_0) + (m_0\0 m_1\0 m'_1-m_1\0 m_0\0 m'_1)\;$$ in $I$, since we accept permutations inside the components. It follows that the graded symmetric tensor algebra $\odot M=TM/I$ reads $$\odot M=\bigoplus_{p\in\N}\bigoplus_{i_1\le\ldots\le i_p} M_{i_1}\odot\ldots\odot M_{i_p}, \quad\text{where}\quad  M_{i_1}\odot\ldots\odot M_{i_p}:=M_{i_1\ldots i_p} / (M_{i_1\ldots i_p}\cap I)\;.$$The `same' result holds true in the graded antisymmetric situation.

Note that, if $R$ is a $\Q$-algebra, the module $M_0\odot M_1$ is isomorphic to $M_0\otimes M_1$, via $$[{(\tiny 1/2)}(m_0\0 m_1 + m_1\0 m_0)]\mapsto m_0\0 m_1\;.$$ As the tensors $m_0\0 m_1$ and ${\tiny \frac{1}{2}}(m_0\0 m_1 + m_1\0 m_0)$ differ by ${\tiny \frac{1}{2}}(m_0\0 m_1 - m_1\0 m_0)$ and thus coincide in the quotient $M_0\odot M_1$, the identification $M_0\odot M_1\simeq M_0\0 M_1$ corresponds to the choice of a specific representative. Moreover, we have module isomorphisms of the type $$M_0\odot M_0\odot M_0\simeq \vee^3M_0\;,\quad M_0\odot M_0\odot M_1\simeq \vee^2M_0\otimes M_1\;,\quad M_0\odot M_1\odot M_1\simeq M_0\otimes \w^2M_1\;.$$Similarly, $$M_0\boxdot M_0\boxdot M_0\simeq \w^3M_0\;,\quad M_0\boxdot M_0\boxdot M_1\simeq \w^2M_0\otimes M_1\;,\quad M_0\boxdot M_1\boxdot M_1\simeq M_0\otimes \vee^2M_1\;.$$

\subsection{Batchelor's theorem for N-manifolds}

The results of this section are well-known. We consider their proofs as a warm-up exercise (although we did not find them in the literature).

\begin{defi} An \emph{ N-manifold} or \emph{$\N$-graded manifold} of degree $n$ (and
dimension $p|q_1|\ldots|q_n$) is a smooth Hausdorff
second-countable (hence paracompact) manifold $M$ endowed
with a sheaf ${\cal A}$ of $\N$-graded commutative associative
unital $\R$-algebras, whose degree 0 term is
$\cA^0=\Ci_{M}$ and which is locally freely generated, over
the corresponding sections of $\Ci_M$ (i.e. over functions of $p$ variables $x^j$ of degree 0), by
$q_1,\ldots, q_n$ graded commutative generators
$\xi_1^{j_1},\ldots,\xi_n^{j_n}$ of degree
$1,\ldots,n$, respectively.\end{defi}

If necessary, we assume -- for simplicity -- that $M$ is connected. Moreover, it is clear that coordinate transformations are required to preserve
the $\N$-degree. An alternative definition of N-manifolds by coordinate charts and
transition maps can be given.\medskip

%The following remarks are basic. It is well-known that a Lie group action on a manifold $M$ induces a Lie algebra action on $M$. If $G$ denotes the Lie group and $\frak g$ its Lie algebra the action by $h\in\frak g$ is given by the fundamental vector field $X^h_m:=d_t|_{t=0}\exp(t h)\cdot m\in T_mM,$ $m\in M.$ If $E\to N$ is a vector bundle, the Lie group $(\R^*_+,\cdot)$ acts on $E$ by homotheties. The action of $1$, viewed as an element of the Lie algebra $\R$ of $\R^*_+$, i.e. the fundamental vector field $X^1$ is the \re{Euler field of the vector bundle} $E$: $$X^h_p:=d_t|_{t=0}\;e^t p\in T_pE_p,\quad p\in E\;.$$ In coordinates, we get $$X^1_{(x,\xi)}=d_t|_{t=0}\;(x,e^t\xi)=(0,\xi)=\xi^i\p_{\xi^i}\;.$$ This field is complete and thus has a global flow, which is easy seen to be given by $\zvf_t(p)=e^tp.$ Hence, if the Euler field of a vector bundle is known, its integration provides the multiplication by reals, so the vector bundle structure. Hence, the question `When is a vector field on a manifold the Euler field of a vector bundle structure on that manifold?'

\noindent{\bf Example 1.} Just as a vector bundle with shifted parity in the fibers is a (split) supermanifold, a graded vector bundle with shifted degrees is a (split) N-manifold. More precisely, let $E_{-1},E_{-2},\ldots,E_{-n}$ be smooth vector bundles of finite rank $q_1,\ldots,q_n$ over a same smooth (Hausdorff, second countable) manifold $M$. If we assign the degree $i$ to the fiber coordinates of $E_{-i}$, we get an N-manifold $E_{-i}[i]$. The direct sum $E=\oplus_{i=1}^n E_{-i}$ (resp., $E[\cdot]=\oplus_{i=1}^n E_{-i}[i]$) is a graded
vector bundle concentrated in degrees $-1$ to $-n$, with local coordinates of degree 0 (resp., with local fiber coordinates of degrees $1,\ldots, n$. We denote the graded symmetric tensor algebra of the dual bundle $E^*=\oplus_{i=1}^n
E^*_{-i}$, where the vectors of $E_{-i}^*$ have degree
$i$, by $\odot E^*.$ Consider first the case $n=2$. We have
$$\odot E^*=\odot (E^*_{-1}\oplus E_{-2}^*)=\odot E_{-1}^*\otimes \odot
E_{-2}^*=\w E_{-1}^*\0 \vee E_{-2}^*\;.$$Hence, $$\cA:=\zG(\odot E^*)=\zG(\w
E^*_{-1}\0 \vee E_{-2}^*)=\oplus_{n\in \N}\oplus_{k+2\ell=n}\,\zG(
\w^{k} E^*_{-1}\0 \vee^{\ell} E_{-2}^*)\;.$$ In particular,
$$\cA^0=\Ci_M,\; \cA^1=\zG(E_{-1}^*),\; \cA^2=\zG(\w^2 E_{-1}^*\oplus
E_{-2}^*),\;\ldots$$Note that, if we work over a common local
trivialization domain $U\subset M$ of $E_{-1}$ and $E_{-2}$, and
denote the base coordinates by $x$ and the fiber coordinates of $E_{-1}$ and $E_{-2}$ by
$\xi$ and $\zh$, respectively ($\xi$ and $\zh$ are then also the base vectors of the fibers of $E_{-1}^*$ and $E_{-2}^*$), a function $f$ in $\cA^2(U)$ reads
$$f(x,\xi,\zh)=\sum_{i<j}f_{ij}(x)\xi^i\xi^j+\sum_af_a(x)\zh^a\;.$$Similar results hold of course for any $n$. Eventually, the sheaf $(M,\cA)$ is an N-manifold of degree $n$. It is clear that this manifold should be denoted by $E[\cdot]=\oplus_{i=1}^n E_{-i}[i]$.

\begin{defi} A N-manifold $E[\cdot]=\oplus_{i=1}^n E_{-i}[i]$ induced by a graded vector bundle is called a \emph{split N-manifold}.\end{defi}

Batchelor's theorem \cite{Bat80} states that in the smooth category any supermanifold is noncanonically diffeomorphic to a split supermanifold. A similar result is known to hold true for N-manifolds.

\begin{theo}\label{BatchelorNMan} Any N-manifold $(M,\cA)$ of degree $n$ is noncanonically diffeomorphic to a split N-manifold $E[\cdot]$, where $E=\oplus_{i=1}^nE_{-i}$ is a graded vector bundle over $M$ concentrated in degrees $-1,\ldots,-n$.\end{theo}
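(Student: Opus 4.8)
The plan is to imitate the classical smooth proof of Batchelor's theorem, replacing the nilpotent ideal of a supermanifold by the sheaf of ideals generated by the positively graded elements, and the $\Z_2$-grading by the $\N$-grading. The first task is to reconstruct the bundles $E_{-i}$ intrinsically from $\cA$. Let $\mathcal{J}\subset\cA$ be the sheaf of ideals generated by the elements of positive degree; it is homogeneous, so $\mathcal{J}/\mathcal{J}^2$ inherits the $\N$-grading. By the local model of Definition 3 (cf. Example 1), over a trivializing chart the degree-$i$ component $(\mathcal{J}/\mathcal{J}^2)^i$ is freely generated over $\Ci_M$ by the $q_i$ generators $\xi_i^{j_i}$, since every decomposable element of degree $i$ (a product of two or more generators of strictly lower degree) lies in $\mathcal{J}^2$. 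Hence $(\mathcal{J}/\mathcal{J}^2)^i$ is a locally free $\Ci_M$-module of constant rank $q_i$ (here I use that $M$ is connected), and therefore, by the Serre--Swan correspondence, the sheaf of sections of a rank-$q_i$ vector bundle which I define to be $E^*_{-i}$, with its vectors placed in degree $i$. Setting $E=\oplus_{i=1}^n E_{-i}$ yields the candidate split model $E[\cdot]$, with structure sheaf $\zG(\odot E^*)$ as described in Example 1.

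Next I would construct a global morphism $\hat\iota\colon \zG(\odot E^*)\to\cA$. For each $i\ge 1$ one has $\cA^i\subseteq\mathcal{J}$, and the canonical surjection $\pi_i\colon \cA^i\twoheadrightarrow (\mathcal{J}/\mathcal{J}^2)^i=\zG(E_{-i}^*)$ of $\Ci_M$-modules admits $\Ci_M$-linear sections locally, namely $\xi_i^{j_i}\mapsto\xi_i^{j_i}$. Being a section of a fixed linear projection is an affine condition, so local sections $\iota_{i,\alpha}$ defined over a trivializing cover may be glued by a partition of unity $\{\rho_\alpha\}$: the combination $\iota_i:=\sum_\alpha \rho_\alpha\,\iota_{i,\alpha}$ is again a global $\Ci_M$-linear section of $\pi_i$, because $\sum_\alpha\rho_\alpha=1$ and $\pi_i$ is $\Ci_M$-linear. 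Assembling the $\iota_i$ into a single degree-preserving $\Ci_M$-linear map $\iota\colon \zG(E^*)\to\cA$ and invoking the universal property of the free graded-commutative algebra $\odot E^*$ over $E^*$, I obtain a morphism of sheaves of $\N$-graded algebras $\hat\iota\colon \zG(\odot E^*)\to\cA$.

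Finally I would prove that $\hat\iota$ is an isomorphism, a statement that may be checked locally. Filter the source $\zG(\odot E^*)$ by polynomial (word-)length and the target $\cA$ by the powers $\mathcal{J}^k$; since $\hat\iota$ sends each generator to itself modulo $\mathcal{J}^2$, it is a filtered morphism. The induced map on the associated graded sends a length-$k$ monomial to $\hat\iota(\xi^{(1)})\cdots\hat\iota(\xi^{(k)})\equiv \xi^{(1)}\cdots\xi^{(k)}\pmod{\mathcal{J}^{k+1}}$, where locally $\mathcal{J}^k/\mathcal{J}^{k+1}\cong\odot^k E^*$, so $\operatorname{gr}\hat\iota$ is the identity. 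Because every factor in $\mathcal{J}$ raises the total degree by at least one, $\mathcal{J}^{d+1}$ meets $\cA^d$ trivially, hence in each fixed total degree the filtration is finite; an isomorphism on $\operatorname{gr}$ then forces $\hat\iota$ itself to be an isomorphism. Thus $\hat\iota$ is a diffeomorphism $E[\cdot]\to(M,\cA)$, noncanonical since it depends on the chosen sections and partition of unity.

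I expect the main obstacle to be precisely this globalization step: the construction is manifestly local, and the real content of the theorem is that the local splittings patch together. This succeeds only in the smooth category, where partitions of unity allow one to average within the affine space of $\Ci_M$-linear sections of each $\pi_i$ (the analogous statement fails, e.g., holomorphically), while the word-length filtration argument is what upgrades a mere linear lift of the generators to a genuine isomorphism of sheaves of algebras.
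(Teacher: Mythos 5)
Your proof is correct, and it rests on the same two pillars as the paper's: reconstructing the bundles $E_{-i}^*$ as the degree-$i$ generators of $\cA$ modulo decomposables, and using smoothness to split the resulting extensions noncanonically. The packaging, however, is genuinely different. The paper proceeds iteratively: it identifies $\cA^1\simeq\zG(E_{-1}^*)$, then splits $0\to(\cA^1)^2\to\cA^2\to\zG(E_{-2}^*)\to 0$ by invoking projectivity of $\zG(E_{-2}^*)$ via Serre--Swan, forms the subalgebra $\cA_2$, and repeats degree by degree, reading off $\cA=\zG(\odot E^*)$ from the accumulated identifications. You instead treat all degrees simultaneously through $\cJ/\cJ^2$ (note that your $(\cJ/\cJ^2)^2$ is exactly the paper's $\cA^2/(\cA^1)^2$, so the two constructions of $E_{-i}^*$ agree), replace the abstract projectivity argument by its concrete avatar -- averaging local sections of $\pi_i$ with a partition of unity, which is where smoothness enters in both proofs -- and then, rather than asserting the final identification, you extend the lift to an algebra morphism by the universal property of $\odot E^*$ and verify it is an isomorphism by comparing the word-length filtration with the $\cJ$-adic one. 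This last step is the real added value of your write-up: the paper's sketch simply states that $\cA_2=\zG(\w E_{-1}^*\otimes\vee E_{-2}^*)$, whereas your associated-graded argument (finite in each total degree because $\cJ^{d+1}\cap\cA^d=0$) supplies the missing justification that a linear lift of generators really induces an isomorphism of sheaves of algebras. The trade-off is that the paper's iterative filtration $\cA_0\subset\cA_1\subset\cdots\subset\cA_n$ is what produces the tower of fibrations discussed in Remark 1, which your one-shot construction does not exhibit.
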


\begin{proof}[Sketch of Proof]
Consider first an N-manifold $(M,\cA)$ of degree $n=2$. Since ${\cal A}^0=\Ci_M$ and ${\cal A}^0{\cal A}^1\subset {\cal A}^1$, the sheaf ${\cal A}^1$ is a locally free sheaf of
$\Ci_M$-modules and % see Roytenberg
$${\cal A}^1\simeq \zG(E_{-1}^*),$$for some vector bundle $E_{-1}\to M$. Let
now ${\cal A}_1$ be the subalgebra of $\cA$ generated by ${\cal
A}^0\oplus{\cal A}^1$. Clearly, \be\label{NMfd1}\cA_1={\cal A}^0\oplus{\cal
A}^1\oplus (\cA^1)^2\oplus\ldots\simeq \zG(\w E_{-1}^*)\ee and ${\cal
A}_{1}\cap {\cal A}^2=(\cA^1)^2$ is a proper $\cA^0$-submodule of
$\cA^2$. Since the quotient $\cA^2/(\cA^1)^2$ is a locally free
sheaf of $\Ci_M$-modules (since locally $\cA^2$ is a direct sum), we have $${\cal A}^2/({\cal A}^1)^2\simeq
\zG(E_{-2}^*),$$where $E_{-2}$ is a vector bundle over $M$. The short
exact sequence \be 0\raa ({\cal A}^1)^2\raa {\cal A}^2\raa
\zG(E_{-2}^*)\raa 0\label{Ses}\ee of ${\cal A}^0$-modules is non
canonically split. Indeed, the Serre-Swan theorem states that, if $N$ is a smooth (Hausdorff, second-countable) manifold, a
$\Ci(N)$-module is finitely generated and projective if and only
if it is the module of smooth sections of a smooth vector bundle
over $N$, see e.g. \cite{Nes}[Theo.11.32], \cite{GMS}. The aforementioned
splitting of the sequence (\ref{Ses}) is a direct consequence
of the fact that $\zG(E_{-2}^*)$ is projective. Let us {\em fix a
splitting} and identify $\zG(E_{-2}^*)$ with a submodule of ${\cal
A}^2$:
$${\cal A}^2 = ({\cal A}^1)^2\oplus\zG(E_{-2}^*)= \zG(\w^2E^*_{-1}\oplus
E^*_{-2})\;.$$It follows that the subalgebra $\cA_2$, which is
generated by $\cA^0\oplus \cA^1\oplus \cA^2$, reads
$$\cA_2=\zG(\w E_{-1}^*\otimes \vee E_{-2}^*)\;.$$In the case $n=2$, the algebra $\cA_2$ is of
course the whole function algebra $\cA$. For higher $n$, we
iterate the preceding approach and obtain finally, modulo choices
of splittings, that {$${{\cal A}=\zG(\w
E^*_{-1}\otimes{\vee}E^*_{-2}\otimes\w E^*_{-3}\otimes\ldots)}=\zG(\odot E^*)\;.$$}Hence, the
considered N-manifold is diffeomorphic, modulo the
chosen splittings, to the split N-manifold $E[\cdot]=\oplus_{i=1}^nE_{-i}[i]\,.$
\end{proof}

\noindent{\bf Remarks 1.} \begin{itemize}\item The preceding theorem means that N-manifolds of degree $n$ together with a choice of splittings are
1:1 with graded vector bundles concentrated in degrees $-1,\ldots, -n$.

\item In view of Equation (\ref{NMfd1}), an N-manifold of degree
$1$ is canonically diffeomorphic to a split N-manifold of degree 1.\medskip

\item Without a choice of splittings, an
N-manifold $M=(M_0,\cA)$ of degree $n$ gives rise to a
filtration $$\cA_0\subset \cA_1\subset \cA_2\subset\ldots\subset
\cA_n=\cA\;,$$which implements a tower of fibrations
{$$M_0\leftarrow M_1\leftarrow M_2\leftarrow\ldots\leftarrow
M_n=M\;,$$}see \cite{Roy02}. Here, $M_1=E_{-1}[1]$, where $E_{-1}\to M_0$ is a smooth vector bundle, whereas $M_i\rightarrow M_{i-1}$, $i> 1$, is only a fibration. Each $M_i$ is an N-manifold of degree $i$.\end{itemize}

\noindent{\bf Example 2.} In case of the N-manifold $M=T^*[2]T[1]M_0$ associated to the standard Courant algebroid, we have the tower $$M_0\leftarrow T[1]M_0\leftarrow T^*[2]T[1]M_0\;.$$ Clearly, the N-manifold $M=M_2$ is not canonically of the type $E_{-1}[1]\oplus E_{-2}[2]$: it is only noncanonically split. On the other hand, it is even diffeomorphic, as NQ-manifold, to a split NQ-manifold, see \cite{SZ11}[Theorem 3.4].

\subsection{Examples of canonically split N-manifolds}

Split N-manifolds appear naturally in connection with the integration problem of exact Courant algebroids.

Let us be more precise. A representation of a Lie algebroid $(A,\ell_2,\zr)$ on a vector bundle $E$ over the same base, say $M$, is defined very naturally as a Lie algebroid morphism $\zd: A\to \cA(E),$ where $\cA(E)$ is the Atiyah algebroid associated to $E$ \cite{CM08}. In other words, $\zd$ is a $\Ci(M)$-linear map from $\zG(A)\to \zG(\cA(E))\simeq \op{Der}(\zG(E))$ (where the latter module is the module of derivative endomorphisms of $\zG(E)$), which respects the anchors and the Lie brackets. When viewing $\zd$ as a map $\zd:\zG(A)\times\zG(E)\to \zG(E)$, we can interpret it as an $A$-connection on $E$ with vanishing curvature (the anchor condition is automatically verified). It is well-known that a $TM$-connection on $E$ allows to extend the de Rham operator to $E$-valued differential forms $\zG(\w T^*M\otimes E)$. This generalization verifies the Leibniz rule with respect to the tensor product and it squares to 0 if and only if the connection is flat. Similarly, the flat $A$-connection $\zd$ on $E$, i.e. the representation $\zd$ on $E$ of the Lie algebroid $A$, can be seen as a degree 1 operator $d^\zd$ on $E$-valued bundle forms $$\zG(\w A^*\otimes E)\;,$$which is a derivation and squares to 0, i.e as a kind of homological vector field.

Many concepts -- even involved ones -- are very natural in this supergeometric setting. For instance, to obtain the notion of representation up to homotopy of a Lie algebroid, it suffices to replace in the latter algebraic definition of an ordinary Lie algebroid representation, the vector bundle $E$ by a graded vector bundle $E$ and to ask that the square zero derivation, say $D$, be of total degree 1. Representations up to homotopy, we denote them by $(E,D)$, provide the appropriate framework for the definition of the adjoint representation of a Lie algebroid and the interpretation of the Lie algebroid deformation cohomology \cite{CM08} as a cohomology associated to a representation. For more details we refer the reader to \cite{AC09}. When retranslated into the original geometric context (explicit transfers of this type can be found below), the preceding supergeometric definition means roughly that a representation up to homotopy of a Lie algebroid $A$, is a complex $(E,\p)$ of vector bundles endowed with an $A$-connection $\zd$ that is flat only up to homotopy: $R^{\zd}=-\p\zw_2$, the homotopy $\zw_2$ verifies a coherence law $d^{\zd}\zw_2=-\p\zw_3$ up to homotopy $\zw_3$, and so on.

To integrate an exact Courant algebroid \cite{SZ11}, Sheng and Zhu view this algebroid as an extension of the tangent bundle by its coadjoint representation up to homotopy and integrate that extension. The extensions of Lie algebroids $A$ by representations up to homotopy $(E,D)$, they consider, are twisted semidirect products of $A$ by $(E,D)$. More precisely, they are twists by cocycles of the representation up to homotopy induced by $D$ on $\odot sE^*$, where $s$ is the suspension operator and $E$ a graded vector bundle. Both, semidirect products and extensions, are canonically split N- and even NQ-manifolds. For instance, a semidirect product is a representation on $\odot sE^*$, i.e. a degree 1 square 0 derivation on $$\zG(\w A^*\otimes \odot sE^*)=\zG(\odot sA^*\0 \odot sE^*)=\zG(\odot (s^{-1}(A\oplus E))^*)\;,$$and therefore a split NQ-manifold.

\section{Geometry of Lie $n$-algebroids}

\subsection{Standard and homological gradings of split N-manifolds}\label{Gradings}

Let $\cM=(M,\cA)$ be an N-manifold of degree $n$, consider local coordinates in an open subset $U\subset M$, and denote by $u=(u^{\za})=(\xi_1^{j_1},\ldots,\xi_n^{j_n})$ the coordinates of nonzero degree. The weighted Euler
vector field $$\ze_U=\sum_{\za}\zvw(u^{\za})u^{\za}\p_{u^{\za}}\;,$$
where $\zvw(u^{\za})$ denotes the degree of $u^{\za}$, is
well-defined globally: $\ze_U=\ze|_U$, where $\ze$ is the degree-derivation of the graded algebra $\cA^{\bullet}=\oplus_k\cA^k$, i.e. for any $f\in\cA^k$, $\ze f=kf\in\cA^k.$ Denote now by $\op{Der}^{\bullet}\cA$ the sheaf of graded derivations of $\cA$, which is in particular a sheaf of graded Lie algebras -- we denote their brackets by $[-,-]$. It is well-known that the grading of these vector fields is captured by the degree zero interior derivation $[\ze,-]$ of $\op{Der}^{\bullet}\cA$: if $X\in\op{Der}^k\cA$, then $[\ze,X]=kX\in\op{Der}^k\cA$. The gradings of other geometric objects are encrypted similarly, see \cite{Roy02}. On the other hand, it is clear that the standard Euler vector field $\tilde\ze_U=\sum_{\za}u^{\za}\p_{u^\za},$ which encodes the local grading by the number of generators, is not global.\bigskip

\noindent{\bf Example 4.} Consider for instance $\cM=T^*[2]T[1]M$,
with coordinates $q^j,\xi^j,p_j,\zy_j$, where $\xi^j$ is thought
of as $dq^j$, $p_j$ as $\p_{q^j}$, and $\zy_j$ as $\p_{\xi^j}$, so
that they are of degree $0,1,2,1$, respectively. A simple example, e.g. the coordinate transformation $q=Q,\xi=Q\Xi+e^Q\Theta,p=Q\Xi\Theta +P,\theta=Q^2\Xi,$ for $\dim M=1$, allows to check the preceding claim by direct computation.\medskip

The situation is different for a split N-manifold $E[\cdot]=\oplus_{i=1}^n E_{-i}[i]$ over $M$. Its structure sheaf
$$\cA=\zG(\odot E^*)$$ $$=\Ci_M\oplus \zG(E^*_{-1})\oplus \lp
\w^2_{\Ci_M}\zG(E^*_{-1})\oplus\zG(E^*_{-2})\rp$$ $$\oplus \lp
\w^3_{\Ci_M}\zG(E^*_{-1})\oplus
\zG(E^*_{-1})\0_{\Ci_M}\zG(E^*_{-2})\oplus\zG(E^*_{-3})\rp\oplus\ldots\;$$clearly carries two gradings, the standard grading induced by $E$ (encoded by $\ze$) and the grading by the number of generators (encoded by $\tilde \ze$). We refer to the first (resp., second) degree as the {\it standard} (resp., {\it homological}) degree and write $$\cA^{\bullet}=\oplus_k\cA^k\quad (\text{resp.,}\quad ^{\bullet}\!\cA=\oplus_r\, ^{r}\!\cA)\;.$$ Both Euler fields, $\ze$ and $\tilde \ze$, are now global: $\tilde\ze$ is the degree-derivation of the graded algebra $^{\bullet}\!\cA=\oplus_r\, ^{r}\!\cA$, i.e. its eigenvectors with eigenvalue $r$ are the elements of $^{r}\!\cA$.\bigskip

\noindent{\bf Remark 2.} Split supermanifolds are not a full subcategory of supermanifolds: their morphisms respect the additional grading in the structure sheaf. For N-manifolds the situation is similar. N-manifolds are nonlinear generalizations of vector bundles \cite{Vor10}, in the sense that a coordinate transformation may contain nonlinear terms. If we denote, e.g. in degree $n=2$, the transformation by $(x^i,\xi^a,\zh^{\frak a})\leftrightarrow (y^i,\theta^a,\tau^{\frak a})$, this means that the general form of $\zt^{\frak b}$ is $$\zt^{\frak b}=\zt^{\frak b}_{ab}(x)\xi^a\xi^b+\zt^{\frak b}_{\frak a}(x)\zh^{\frak a}\;.$$ In the case of split N-manifolds, morphisms respect not only the standard degree, but also the homological one: $$y^{j}=y^{j}(x),\;\theta^b=\theta^b_a(x)\xi^a,\;\tau^{\frak b}=\tau^{\frak b}_{\frak a}(x)\zh^{\frak a}\;$$and
\be x^{i}=x^{i}(y),\;\xi^a=\xi^a_b(y)\theta^b,\;\zh^{\frak a}=\zh^{\frak a}_{\frak b}(y)\tau^{\frak b}\;.\label{TransFunctNMfd}\ee Invariance of $\ze_U$ and $\tilde\ze_U$ is now also easily checked by direct computation.\bigskip

Due to the additional grading in the structure sheaf $\cA$, vector fields of a split N-manifold $(M,\cA)$, i.e. sections of the sheaf
$\op{Der}^{\bullet}\cA$ of graded derivations of $\cA$, acquire a homological degree as well: we say that $X\in\op{Der}^{\ell}\cA$ has homological degree $s$ and we write $X\in\,^s\!\op{Der}^{\ell}\cA$, if $X(^r\!\cA)\subset\, ^{r+s}\!\cA$. This condition is equivalent to the requirement that $[\tilde\ze,X]=sX$. Indeed, $\tilde\ze\in\op{Der}^{0}\cA$ and if $f\in\, ^r\cA$, then
$$[\tilde \ze,X]f=\tilde\ze X f-X\tilde\ze f=\tilde\ze X f-rXf\;.$$ Hence, the announced equivalence.

\begin{prop}\label{BigradDer} The sheaf of vector fields of a split N-manifold $(M,\cA)$ of degree $n$ is bigraded, i.e. $$\op{Der}^{\bullet}\cA=\oplus_{\ell \ge -n}\op{Der}^{\ell}\cA\quad\text{and}\quad \op{Der}^{\ell}\cA=\oplus_{s\ge -1}\,^s\!\op{Der}^{\ell}\cA\;.$$\end{prop}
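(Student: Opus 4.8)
The plan is to exhibit both decompositions as the joint eigenspace decomposition of the two commuting, globally defined, degree-zero interior derivations $[\ze,-]$ and $[\tilde\ze,-]$ of $\op{Der}^{\bullet}\cA$. The standard Euler field $\ze$ is global on any N-manifold, whereas the homological Euler field $\tilde\ze$ is global precisely because $(M,\cA)$ is split: the transition rules (\ref{TransFunctNMfd}) of Remark 2 respect the homological grading, so $\tilde\ze$ is well defined independently of the trivialization. By the characterizations recalled just before the statement, a derivation $X$ lies in $\op{Der}^{\ell}\cA$ iff $[\ze,X]=\ell X$ and in ${}^{s}\!\op{Der}^{\ell}\cA$ iff moreover $[\tilde\ze,X]=sX$; hence the claimed summands are exactly the joint eigensheaves of these two global operators. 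Since $[\ze,\tilde\ze]=0$ (both fields are diagonal in the monomial frame), the induced interior derivations commute, and it suffices to produce, over an arbitrary trivializing open set, the decomposition of every section into bihomogeneous components, and to observe that these components are frame-independent and therefore glue to a global direct-sum decomposition of sheaves.

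First I would fix local coordinates over a trivializing domain $U$: base coordinates $x^j$ of degree $0$ and fibre coordinates $u_i^a$ of degree $i$, for $1\le i\le n$ and $1\le a\le q_i$. An arbitrary $X\in\op{Der}^{\bullet}\cA(U)$ is determined by its values on the coordinates and reads, in the associated local frame, $X=\sum_j f^j\,\p_{x^j}+\sum_{i,a}g_i^a\,\p_{u_i^a}$ with coefficients $f^j,g_i^a\in\cA(U)$. Because $\cA=\zG(\odot E^*)$ is itself bigraded, $\cA(U)=\bigoplus_{k,r}{}^{r}\!\cA^{k}(U)$, each coefficient is a finite sum of its bihomogeneous pieces; substituting these expansions writes $X$ as a finite sum of derivations, each homogeneous for both gradings. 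Directness of the sum is automatic, since joint eigenvectors of the commuting pair $[\ze,-],[\tilde\ze,-]$ attached to distinct eigenvalue pairs $(\ell,s)$ are linearly independent.

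It then remains to compute the bidegrees of the frame fields and read off the bounds. The field $\p_{x^j}$ has standard degree $0$ and homological degree $0$, while $\p_{u_i^a}$ has standard degree $-i$ and homological degree $-1$. Hence, for a bihomogeneous coefficient $g_i^a\in{}^{r}\!\cA^{k}$, the term $g_i^a\,\p_{u_i^a}$ has standard degree $k-i$ and homological degree $r-1$, and a term $f^j\,\p_{x^j}$ with $f^j\in{}^{r}\!\cA^{k}$ has standard degree $k$ and homological degree $r$. Since the standard and homological degrees $k,r$ of any element of $\cA$ are non-negative, every homogeneous component has standard degree $\ge-i\ge-n$ and homological degree $\ge-1$; the extreme values $\ell=-n$ and $s=-1$ are realized, for instance by $g_n^a\,\p_{u_n^a}$ and $g_i^a\,\p_{u_i^a}$ with $g_n^a,g_i^a\in\Ci_M$. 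This yields $\op{Der}^{\bullet}\cA=\bigoplus_{\ell\ge-n}\op{Der}^{\ell}\cA$ and $\op{Der}^{\ell}\cA=\bigoplus_{s\ge-1}{}^{s}\!\op{Der}^{\ell}\cA$.

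I expect the only genuine subtlety to be the passage from the local picture to the global statement about sheaves, and this is exactly where splitness is indispensable: without the global field $\tilde\ze$ the homological grading of derivations would not even be defined off a single chart. Once both Euler fields are known to be global, the bihomogeneous component of $X$ is characterized chart-independently as a joint eigencomponent of $[\ze,-]$ and $[\tilde\ze,-]$, so the local decomposition of the previous step patches over overlaps without adjustment. The remaining ingredients — the bidegree bookkeeping for $\p_{x^j}$ and $\p_{u_i^a}$ and the non-negativity of the degrees of the coefficients — are routine.
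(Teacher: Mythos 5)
Your proof is correct, and it reaches the gluing step by a different route than the paper. The paper's proof shares your first move --- expand $X$ over a chart in the frame $\p_{x},\p_{u}$ with bihomogeneous coefficients and read off the bidegrees --- but it then establishes that the bihomogeneous pieces are globally defined by an explicit computation: it transforms $\p_x,\p_\xi,\p_\zh$ under the split transition functions (\ref{TransFunctNMfd}) and checks by hand that the degree-$s$ brackets of the local expressions over $U$ and $V$ agree on $U\cap V$, hence glue to a global $X^s$ with $X=\sum_s X^s$. You instead characterize ${}^{s}\!\op{Der}^{\ell}\cA$ as the joint eigensheaf of the commuting global interior derivations $[\ze,-]$ and $[\tilde\ze,-]$ (the characterization $[\tilde\ze,X]=sX$ is indeed recorded in the paper just before the statement), so that the local eigencomponents are chart-independent by uniqueness of the eigendecomposition and patch automatically. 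Your argument is the more structural one and avoids redoing the transformation computation inside the proof; its cost is that it leans entirely on the prior fact that $\tilde\ze$ is global for split N-manifolds, which the paper itself only justifies by the same kind of direct computation (Remark 2), so the two proofs ultimately rest on the same input. Your bookkeeping of the bidegrees of $\p_{x^j}$ and $\p_{u_i^a}$ and the resulting bounds $\ell\ge -n$, $s\ge -1$ matches the paper's.
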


\begin{proof} The first part of the claim is obvious. Let now $X\in\op{Der}^{\ell}\cA$, consider -- to simplify notations -- the case $n=2$, and denote, as above, local coordinates in $U\subset M$ by $v=(x,\xi,\zh)$ and in $V\subset M$ by $w=(y,\theta,\zt)$, where superscripts are understood. The vector field $X$ locally reads \be X|_U=\sum_s\left[ f_s(v)\p_{x}+g_{s+1}(v)\p_{\xi}+h_{s+1}(v)\p_{\zh}\right]\label{CoordFormU}\ee and \be X|_V=\sum_s\left[ F_s(w)\p_{y}+G_{s+1}(w)\p_{\theta}+H_{s+1}(w)\p_{\zt}\right]\;,\label{CoordFormV}\ee where the sum over $s$ refers to the homological grading $\cA=\oplus_s\,^s\!\cA$ and where the degree of $\p_{x}$ (resp., $\p_{\xi}$, $\p_{\zh}$) is 0 (resp., $-1$, $-1$) with respect to the homological degree (and similarly for $\p_{y},\p_{\theta}, \p_{\zt})$. In view of (\ref{TransFunctNMfd}), we get $$\p_{x}=\p_{x}{y}|_{x=x(y)}\p_{y}+\p_{x}\theta|_{x=x(y)}
\xi(y)\theta\p_{\theta}+\p_{x}{\tau}|_{x=x(y)}\zh(y)\tau\p_{\tau}=:A(y)\p_y+B(y)\theta\p_{\theta}+C(y)\tau\p_{\zt}\;,$$ $$\p_{\xi}=:D(y)\p_{\theta}\quad\text{and}\quad \p_{\zh}=:E(y)\p_{\tau}\;.$$ Hence, on $U\cap V$,
\be X|_{U}=\sum_s\left[ f_s(w)A(y)\p_y+(f_{s}(w)B(y)\theta+g_{s+1}(w)D(y))\p_{\theta}\right.$$ $$\left.+(f_{s}(w)C(y)\tau +h_{s+1}(w)E(y))\p_{\zt}\right]\;.\label{CoordFormUV}\ee It follows that the coefficients of $\p_y,\p_{\theta},\p_{\zt}$ and therefore the brackets in (\ref{CoordFormV}) and (\ref{CoordFormUV}) coincide on $U\cap V$. This means that the brackets in (\ref{CoordFormU}) and (\ref{CoordFormV}), which are elements of $^s\!\op{Der}^{\ell}\cA(U)$ and $^s\!\op{Der}^{\ell}\cA(V)$, respectively, are the restrictions of a global vector field $X^s\in\,^s\!\op{Der}^{\ell}\cA$. Eventually, $X=\sum_sX^s$ and this decomposition is obviously unique.\end{proof}

\subsection{$\Z$-Graded symmetric tensor coalgebras}

This section provides information that will be needed later on. We begin with some results on the graded symmetric tensor coalgebra.\medskip

If $V$ is a graded vector space, we denote by $\bar \odot V=\oplus_{r\ge 1}\odot^r V$ the reduced graded symmetric tensor algebra over $V$. If $W$ denotes a graded vector space as well, any linear map $f\in\op{Hom}(\bar\odot V, \bar\odot W)$ is defined by its `restrictions' $f_{rs}\in \op{Hom}(\odot^rV,\odot^{s}W),$ $r,s\ge 1.$ The graded symmetric product $$f_{r_1s_1}\odot g_{r_2s_2}\in\op{Hom}(\odot^{r_1+r_2}V,\odot^{s_1+s_2}W)$$
is defined, for a homogeneous $g_{r_2s_2}$ -- of degree $g$ with respect to the standard grading --, by
$$({f_{r_1s_1}\odot g_{r_2s_2})(v_1,\ldots,v_{r_1+r_2})}=$$
\be\sum_{\zs\in\op{Sh(r_1,r_2)}}
(-1)^{g(v_{\zs_1}+\ldots+v_{\zs_{r_1}})}\ze(\zs)\;f_{r_1s_1}(v_{\zs_1},\ldots,v_{\zs_{r_1}})\odot
g_{r_2s_2}(v_{\zs_{r_1+1}},\ldots,v_{\zs_{r_1+r_2}})\;,\label{WedgeHom}\ee
where the $v_k$ are homogeneous elements of $V$ of degree denoted by $v_k$ as well, and where $\ze(\zs)$ is the Koszul sign. If $f,g$ are homogeneous, we have of course
$$g\odot f=(-1)^{fg}f\odot g\;.$$

The graded symmetric tensor product of linear maps is needed to construct coderivations and cohomomorphisms of graded symmetric tensor coalgebras from their corestrictions. The reduced graded symmetric coalgebra on a graded vector space $V$ is made up by the tensor space $\bar\odot V=\oplus_{r\ge 1}\odot^rV$ endowed with the coproduct $$\zD(v_1\odot\ldots\odot v_r):=\sum_{k=1}^{r-1}\sum_{\zs\in\op{Sh}(k,r-k)}\ze(\zs)(v_{\zs_1}\odot\ldots\odot v_{\zs_k})\otimes(v_{\zs_{k+1}}\odot\ldots\odot v_{\zs_r})\;,$$with self-explaining notations. We denote the symmetric coalgebra on $V$ by $\bar\odot^c V$.

A coderivation $\zd:\bar\odot^cV\to\bar\odot^cV$ (resp., a cohomomorphism $\zf:\bar\odot^cV\to \bar\odot^cW$) is completely defined by its corestrictions $\zd_r:\odot^rV\to V$ (resp., $\zf_r:\odot^rV\to W$), $r\ge 1$: \be{\label{CoDerWedge}}\zd(v_1\odot\ldots\odot v_r)=\sum_{k=1}^r(\zd_k\odot\op{id}_{r-k})(v_1\odot\ldots\odot v_r)\;,\ee (resp., \be\label{CohomoWedge}\zf(v_1\odot\ldots\odot v_r)=\sum_{s=1}^r\frac{1}{s!}\sum_{\tiny\begin{array}{c}r_1+\ldots+r_s=r\\r_i\neq 0\end{array}} (\zf_{r_1}\odot\ldots\odot\zf_{r_s})(v_1\odot\ldots\odot v_r)\;.)\ee These results are just a matter of computation.\bigskip

The next facts about the tensor power of the suspension operator will be useful. If $E=\oplus_{i=1}^nE_{-i}$ is as usually a graded vector bundle and if $s:\zG(E_{-k})\to\zG((sE)_{-k+1})$ denotes the shift operator, the assignment
$$\zG(E_{-k})^{\times 2}\ni(X_1,X_2)\mapsto
(-1)^{X_1} s X_1\boxdot s X_2\in \boxdot^2_{\Ci_M}\zG((sE)_{-k+1})$$is
graded symmetric and $\Ci_M$-bilinear, and thus defines a $\Ci_M$-linear map on
$\odot_{\Ci_M}^2\zG$ $(E_{-k})$. More generally, for $a_1\le \ldots \le a_i$, set $$s^i:\zG(E_{-a_1})\odot_{(\Ci_M)}\ldots\odot_{(\Ci_M)}\zG(E_{-a_i})\ni X_{1}\odot\ldots\odot X_i\mapsto$$ $$(-1)^{\sum_j (i-j)X_j}sX_{1}\boxdot\ldots\boxdot sX_i\in \zG((sE)_{-a_1+1})\boxdot_{(\Ci_M)}\ldots\boxdot_{(\Ci_M)}\zG((sE)_{-a_i+1})\;,$$where the tensor products in the source and target spaces are taken, either over $\Ci_M$, or over $\R$. The inverse of $s^i$ is given by $$(s^i)^{-1}=(-1)^{i(i-1)/2}(s^{-1})^i\;,$$where $s^{-1}$ is the desuspension operator.\bigskip

Just as for Lie infinity algebras, we will find two variants of the definition of Lie infinity algebroids. It is important to first fully understand the purely algebraic situation. Let $\ell'\in\op{Codiff}^1(\bar\odot^cV)$ be a degree 1 codifferential of the reduced graded symmetric tensor coalgebra of a desuspended graded vector space $V=s^{-1}W$. The condition $\ell'^2=0$ is satisfied if and only if the projection $\op{pr}_1$ onto $V$ of the restriction to $\odot^rV$ of $\ell'^2$ vanishes for all $r\ge 1$, i.e. if, see (\ref{CoDerWedge}), $$\op{pr}_1\sum_{i=1}^r\sum_{j=1}^{r-i+1}(\ell'_j\odot \op{id}_{r-i-j+1})(\ell'_i\odot\op{id}_{r-i})=\sum_{i+j=r+1}\ell'_j(\ell'_i\odot\op{id}_{r-i})=0,\; \text{for all } r\ge 1\;,$$where $\ell'_i:\odot^iV\to V$ is the $i$-th corestriction of $\ell'$ (it is of course of degree 1). This structure was discovered by Voronov \cite{Vor05} via derived brackets under the name of $L_{\infty}$-antialgebra.

\begin{defi}\label{LieInftyAntiAlg} An \emph{$L_{\infty}$-antialgebra} structure on a graded vector space $V$ is made up by
graded symmetric multilinear maps $\ell_i':V^{\times
i}\to V$, $i\ge 1,$ of degree 1, which verify the conditions
\be\sum_{i+j=r+1}\sum_{\zs\in\op{Sh}(i,j-1)}\ze(\zs)\ell'_{j}(\ell_i'(v_{\zs_1},\ldots,v_{\zs_i}),v_{\zs_{i+1}},\ldots,v_{\zs_r})=0\;,\label{VLInfty}\ee for all homogeneous $v_k\in V$ and all $r\ge 1$. \end{defi}

The definition of an $L_{\infty}$-algebra is similar, except that the multilinear maps are of degree $2-i$ and graded antisymmetric, and that the sign $\ze(\zs)$ in (\ref{VLInfty}) is replaced by $(-1)^{i(j-1)}\op{sign}(\zs)\ze(\zs)$, where $\op{sign}(\zs)$ denotes the signature. See \cite{LS93} and Definition \ref{DefLieNAld}, Equation (\ref{LSLInfty}) of the present text. \medskip

\begin{prop}\label{LSVLinfty} An $L_{\infty}$-antialgebra structure $\ell'_i$, $i\ge 1$, on $V=s^{-1}W$ induces an $L_{\infty}$-algebra structure $\ell_i:=s\;\ell'_i\;(s^{-1})^i$ on $W$, and, vice versa, an $L_{\infty}$-algebra structure $\ell_i$, $i\ge 1$, on $W$ implements an $L_{\infty}$-antialgebra structure $\ell'_i:=(-1)^{i(i-1)/2}s^{-1}\;\ell_i\;s^i$ on $V$.\end{prop}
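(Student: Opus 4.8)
The plan is to treat the two assignments $\ell'_i\mapsto \ell_i=s\,\ell'_i\,(s^{-1})^i$ and $\ell_i\mapsto \ell'_i=(-1)^{i(i-1)/2}s^{-1}\,\ell_i\,s^i$ as a single pair of mutually inverse maps, and then to prove only the first implication. That they are inverse to one another is immediate from the identity $(s^{-1})^i s^i=(-1)^{i(i-1)/2}\op{id}$, which is a restatement of the formula $(s^i)^{-1}=(-1)^{i(i-1)/2}(s^{-1})^i$ recalled just above the statement: conjugating $\ell_i=s\,\ell'_i\,(s^{-1})^i$ yields $s^{-1}\ell_i s^i=\ell'_i\,(s^{-1})^i s^i=(-1)^{i(i-1)/2}\ell'_i$, which is exactly the claimed second formula. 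Hence it suffices to show that if the $\ell'_i$ satisfy the $L_\infty$-antialgebra relations (\ref{VLInfty}), then the $\ell_i$ are graded antisymmetric, of degree $2-i$, and satisfy the $L_\infty$-algebra relations; the converse direction then follows for free.

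I would check the shape conditions first. The degree count is immediate: $s$ raises the standard degree by $1$, each of the $i$ factors of $s^{-1}$ lowers it by $1$, and $\ell'_i$ has degree $1$, so $\ell_i$ has degree $1+1-i=2-i$, as required by the Lada--Stasheff convention. For the symmetry type, the key is that the tensor power $s^i$ (and likewise $(s^{-1})^i$) intertwines the graded symmetric and graded antisymmetric products: this is precisely the content of the décalage map recalled before the statement, whose defining sign $(-1)^{\sum_j(i-j)X_j}$ is designed so that a graded symmetric element of $\odot^iV$ is carried to a graded antisymmetric tensor in $\boxdot^i W$. Since $\ell'_i$ is graded symmetric on $V$, pre-composition with $(s^{-1})^i$ and post-composition with $s$ turn it into a graded antisymmetric map on $W$.

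The core of the proof is the equivalence of the two families of relations. The cleanest route is coalgebraic: by the computation preceding Definition \ref{LieInftyAntiAlg}, the relations (\ref{VLInfty}) are equivalent to $\ell'^2=0$ for the degree $1$ codifferential $\ell'\in\op{Codiff}^1(\bar\odot^c V)$ with corestrictions $\ell'_i$, and, by the same argument applied to the graded antisymmetric coalgebra, the $L_\infty$-algebra relations are equivalent to $D^2=0$ for the degree $1$ codifferential $D\in\op{Codiff}^1(\bar\boxdot^c W)$ with corestrictions $\ell_i$. One then assembles the maps $s^i$ into a graded coalgebra isomorphism $S:\bar\odot^c V\to\bar\boxdot^c W$ (the décalage signs are exactly what is needed for $S$ to respect the two shuffle coproducts), so that $\ell'$ and $D$ are conjugate under $S$ up to the normalization $(s^i)^{-1}=(-1)^{i(i-1)/2}(s^{-1})^i$; in particular $\ell'^2=0$ if and only if $D^2=0$. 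Alternatively, and more elementarily, one substitutes $w_k=s v_k$ and $\ell_i=s\,\ell'_i\,(s^{-1})^i$ directly into the $L_\infty$-algebra relation and reduces it term by term, using the suspension identities, to (\ref{VLInfty}).

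Either way, the hard part — and the only genuine computation — is the reconciliation of the Koszul signs. For a fixed splitting $i+j=r+1$ and a fixed $(i,j-1)$-shuffle $\zs$, one must show that the sign $(-1)^{i(j-1)}\op{sign}(\zs)\,\ze(\zs)$ attached to the $W$-side term, together with all the suspension signs produced by moving the operators $s$ and $s^{-1}$ past the homogeneous arguments and by the two décalage factors $s^i$ and $(s^{-1})^j$, collapses to the bare Koszul sign $\ze(\zs)$ of the corresponding $V$-side term in (\ref{VLInfty}), up to a single global sign independent of $\zs$. This is the standard décalage sign lemma for shuffles; carrying it out for arbitrary $i,j$ and arbitrary $\zs$ — for instance by comparing $\op{sign}(\zs)$ with the Koszul signs on the two sides, or by an induction on the shuffle — is where all the bookkeeping lives, and is the step I expect to be the main obstacle.
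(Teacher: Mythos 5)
Your proposal is correct and follows essentially the same route as the paper, whose own ``proof'' consists of the single remark that the statement is a reformulation of the standard correspondence between Lie infinity structures and codifferentials of the (de)suspended graded symmetric coalgebra. Your extra observation that the two formulas are mutually inverse via $(s^i)^{-1}=(-1)^{i(i-1)/2}(s^{-1})^i$, so that only one implication needs checking, is sound, and the d\'ecalage sign bookkeeping you defer is precisely the computation the paper also leaves implicit.
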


The result is just a reformulation of the construction of a Lie infinity algebra from a codifferential. Let us nevertheless emphasize that the sign in the preceding correspondence is important (and can for instance not be transferred from the definition of $\ell_i'$ to that of $\ell_i$).

\subsection{Higher derived brackets construction of Lie $n$-algebroids}

%\noindent{\bf Remark}. Lie algebroid structures on $E$ can also be viewed as antisymmetric biderivative endomorphisms $\zm\in\op{Der}^1(\zG(E))$ (degree $n$ means $(n+1)$-linear) such that $[\zm,\zm]_{\op{NR}}=0,$ where $[-,-]_{\op{NR}}$ denotes the Nijenhuis-Richardson bracket \cite{Ger63}, \cite{Gra03}. Crainic and Moerdijk \cite{CM08} prove the above 1:1 correspondence in a nonstandard way by showing that the graded Lie algebras of derivations of $\zG(\w E^*)$ and of antisymmetric multiderivative endomorphisms of $\zG(E)$ are isomorphic.\medskip

We already mentioned that Lie algebroids are 1:1 with Q-manifolds. In this section we extend this correspondence to Lie $n$-algebroids and NQ-manifolds of degree $n$. Many authors consider Lie $n$-algebroids, but actually just mean NQ-manifolds \cite{Sev01}: it is only in 2011 that Sheng and Zhu defined split Lie $n$-algebroids by means of anchors and brackets \cite{SZ11}. They mention the bijection with split NQ-manifolds, but give no proof. It turned out that the latter is highly technical. Below, we provide two possible approaches to this correspondence. In particular, we prove that the orders of the brackets (as differential operators) of a Lie $n$-algebroid suggested in \cite{SZ11} are the only possible ones. For algebroids with generalized anchors, see \cite{GKP11}.\medskip

\noindent{\bf Remark 3.} Many concepts of Lie $n$-algebras appear in the literature. Lie $n$-algebras are in principle specific linear $(n-1)$-categories. But the term `Lie $n$-algebras' often also refers to $n$-ary Lie algebras and to $n$-term Lie infinity algebras. However, whereas Lie 2-algebras in the categorical sense and 2-term Lie infinity algebras are the objects of two 2-equivalent 2-categories \cite{BC04}, `categorical' Lie 3-algebras are (in 1:1 correspondence with) quite particular 3-term Lie infinity algebras (their bilinear and trilinear maps have to vanish in degree (1,1) and in total degree 1, respectively). The main reason for these complications is that the map $$\boxtimes:L\times L'\mapsto L\boxtimes L'\;,$$ where $\boxtimes$ denotes the monoidal structure of the category {\tt Vect} $n$-{\tt Cat} of linear $n$-categories, is not a bilinear $n$-functor \cite{KMP11}. Nevertheless, when speaking about a Lie $n$-algebra (resp., algebroid), we mean in this text an $n$-term Lie infinity algebra (resp., algebroid).\medskip

One of the possible approaches to split Lie $n$-algebroids is Voronov's higher derived brackets construction \cite{Vor05}, which we now briefly
recall. Let $L$ be a Lie superalgebra with bracket $[-,-]$ and let $P\in\op{End}L$ be a
projector, such that $V=P(L)$ be an abelian Lie subalgebra and
$$P[\ell,\ell']=P[P\ell, \ell']+P[\ell,P\ell']\;,$$for any $\ell,\ell'\in
L.$ The latter condition is just a convenient way to say that
$\op{Ker} P$ is a Lie subalgebra as well. Let us mention that this
setup implies that $L=V\oplus K,$ $K=\op{Ker}P$.
% If $P$ is of degree 0, the subalgebras are graded.
Consider now an odd derivation $D\in\op{Der}L$ that respects $K$,
i.e. $DK\subset K$, and construct higher derived brackets on $V$:
$$\{v_1,\ldots,v_k\}_D:=P[\ldots[[Dv_1,v_2],v_3],\ldots,
v_k]\;.$$If $D^2(V)=0,$ this sequence of $k$-ary brackets, $k\ge
1$, defines a Lie infinity antialgebra structure on $V$ and induces a Lie infinity algebra structure on $sV$.\medskip

Next we consider a split N-manifold $E[\cdot]=\oplus_{i=1}^n E_{-i}[i]$ of
degree $n$ over a base manifold $M$. The interior product of an element of
$\cA=\zG(\odot E^*)=\odot_{\Ci(M)}\zG(E^*)$ by $X_j\in
\zG(E_{-j})$ is defined by $0$ on $f\in\Ci(M)$, on the other generators
$\zw_k\in\zG(E_{-k}^*)$ by
$$i_{X_j}\zw_k=\zd_{jk}(-1)^{jk}\zw_k(X_j)\in
\cA^{k-j}\;,$$where $\zd_{jk}$ is Kronecker's symbol, and it is extended
to the whole graded symmetric algebra $\cA^{\bullet}$ as a derivation of degree $-j$, i.e. by $$i_{X_j}(S\odot
T)=(i_{X_j}S)\odot T+(-1)^{j\ell}S\odot (i_{X_j}T)\;,$$where
$S\in\cA^{\ell}$ and $T\in\cA$. It is
clear that we thus assign to any $X\in\zG(E)$ a unique
$i_X=\sum_ji_{X_j}\in\, ^{-1}\!\op{Der}\cA$. As
usual:

\begin{lem}\label{-1Der} The sections in $\zG(E)$ are exactly the derivations in $^{-1}\!\op{Der}\cA$.\end{lem}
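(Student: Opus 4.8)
The plan is to show that the $\Ci_M$-linear assignment $X\mapsto i_X$, already constructed above as a map $\zG(E)\to\,^{-1}\!\op{Der}\cA$, is a bijection (indeed an isomorphism of sheaves of $\Ci_M$-modules). I would establish injectivity and surjectivity separately, the forward map being given, so that only its bijectivity remains.

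For injectivity I would exploit the standard grading. Since $i_{X_j}$ has standard degree $-j$, the summands of $i_X=\sum_j i_{X_j}$ are exactly the standard-homogeneous components of $i_X$, so $i_X=0$ forces $i_{X_j}=0$ for each $j$. Evaluating on an arbitrary $\zw_j\in\zG(E_{-j}^*)$ then gives $(-1)^j\zw_j(X_j)=0$, and nondegeneracy of the pairing between $\zG(E_{-j})$ and $\zG(E_{-j}^*)$ yields $X_j=0$, hence $X=0$.

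For surjectivity I would start from an arbitrary $X\in\,^{-1}\!\op{Der}\cA$. The first step is to read off from the homological grading that $X(^0\!\cA)\subset\,^{-1}\!\cA=0$, i.e. $X$ annihilates $\Ci_M$; since $\Ci_M$ sits in standard degree $0$, the Leibniz rule then gives $X(fa)=fX(a)$, so $X$ is $\Ci_M$-linear. The second step is to note $X(^1\!\cA)\subset\,^0\!\cA=\Ci_M$, so that each restriction $X|_{\zG(E_{-k}^*)}$ is a $\Ci_M$-linear functional $\zG(E_{-k}^*)\to\Ci_M$. Because the bundles $E_{-k}$ have finite rank, I would invoke the reflexivity isomorphism $\op{Hom}_{\Ci_M}(\zG(E_{-k}^*),\Ci_M)\simeq\zG(E_{-k})$ (Serre--Swan) to produce the unique $Y_k\in\zG(E_{-k})$ with $\zw_k(Y_k)=(-1)^kX\zw_k$ for all $\zw_k$, and then set $Y=\sum_k Y_k\in\zG(E)$.

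Finally I would verify $X=i_Y$. Both sides are $\Ci_M$-linear derivations of $\cA$ annihilating $\Ci_M$, hence each is determined by its values on the module generators $\zG(E^*)=\oplus_k\zG(E_{-k}^*)$; on such a generator $i_Y\zw_k=i_{Y_k}\zw_k=(-1)^k\zw_k(Y_k)=X\zw_k$ by the choice of $Y_k$, so the two derivations coincide. The standard-degree bookkeeping is automatic here: having homological degree $-1$ forces the standard-degree-$(-j)$ part of $X$ to send $\zG(E_{-k}^*)$ into $\cA^{k-j}\cap\,^0\!\cA$, which is nonzero only for $k=j$, exactly reproducing the selective contraction built into $i_{Y_j}$. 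I expect the main obstacle to be bookkeeping rather than conceptual: one must track the Koszul sign $(-1)^{jk}$ in the definition of $i_{X_j}$ so that it is correctly absorbed into $Y_k$, and one must use finiteness of the ranks to pass from $\Ci_M$-linear functionals on $\zG(E_{-k}^*)$ to sections of $E_{-k}$. Naturality in $M$ ensures the local constructions glue, so that $X\mapsto Y$ and $Y\mapsto i_Y$ are mutually inverse sheaf morphisms.
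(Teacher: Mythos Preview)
Your argument is correct but differs in style from the paper's. The paper proves surjectivity by a local coordinate computation: given $\zd\in\,^{-1}\!\op{Der}\cA$, one writes $\zd|_U=\sum_{\za}f^{\za}(x)\p_{u^{\za}}$ in a trivializing chart, interprets the $u^{\za}$ as a local frame of $E^*$ with dual frame $u_{\za}$ of $E$, sets $X_U=\sum_{\za}f^{\za}u_{\za}$, checks $\zd|_U=i_{X_U}$ by comparing both derivations on the generators, and then glues the $X_U$ into a global section. Your route is coordinate-free: you first extract from the homological degree that any $X\in\,^{-1}\!\op{Der}\cA$ annihilates $\Ci_M$ and is therefore $\Ci_M$-linear, and then you invoke Serre--Swan reflexivity $\op{Hom}_{\Ci_M}(\zG(E_{-k}^*),\Ci_M)\simeq\zG(E_{-k})$ to produce the preimage $Y$ directly, checking equality of derivations on generators at the end. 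The paper's approach is more elementary and self-contained (no Serre--Swan needed, only the local form of a derivation); yours is cleaner conceptually and makes explicit why finite rank is essential. You also treat injectivity separately, which the paper leaves implicit. Both are fine; the sign bookkeeping you flag is indeed the only delicate point, and you have it right: $i_{Y_k}\zw_k=(-1)^{k^2}\zw_k(Y_k)=(-1)^k\zw_k(Y_k)$.
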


\begin{proof} Let $\zd\in\, ^{-1}\!\op{Der}\cA$. Locally,
in coordinates $v=(x,u)=(x^{i},u^{\za})$ over
$U\subset M$, where the $x^i$ are the base coordinates and the
$u^{\za}$ the shifted fiber coordinates, this derivation reads
$$\zd|_U=\sum_{\za} f^{\za}(x)\p_{u^{\za}}\in\,^{-1}\!\op{Der}\cA(U)\;.$$ The coordinates
$u^{\za}$ of the fibers of the $E_{-i}$ can be interpreted as frames of the $E_{-i}^*$ over $U$. Let now $u_{\za}$ be
the dual frames of the $E_{-i}$ over $U$, $u_{\za}(u^{\zb})=\zd^{\zb}_{\za}$, and consider $$X_U:=\sum_{\za}f^{\za}(x)u_{\za}\in\zG(U,E)\quad\text{and}\quad
i_{X_U}\in\,^{-1}\!\op{Der}\cA(U)\;.$$Since the actions
of the derivations $\p_{u^{\za}}$ and $i_{u_{\za}}$ on the generators of $\cA(U)$ coincide, we have $\zd|_U=i_{X_U}$. It follows
that the local sections $X_U\in\zG(U,E)$ defined over different
chart domains $U$ coincide in $U\cap V$ and thus define a global
section $X\in\zG(E)$, such that $X|_U=X_U$. Finally, we obtain
$\zd=i_X$.\end{proof}

It is now quite easy to see which algebroid structure is encoded in the data of a split NQ-manifold. Let $E[\cdot]=\oplus_i E_{-i}[i]$ be a split N-manifold of degree $n\ge 1$ and let $Q\in\op{Der}^1\cA$ be a homological vector field, $Q^2={\small \frac{1}{2}}[Q,Q]=0\;.$ For instance, in the case $n=2$ and in local
coordinates $(x^i,\xi^a,\zh^{\frak a})$, such a derivation is of the type \be\label{QDEG1}Q=\sum
f_a^{i}(x)\xi^a\p_{x^{i}}+\sum (g^a_{bc}(x)\xi^b\xi^c
+h^a_{\frak a}(x)\zh^{\frak a})\p_{\xi^a}+\sum (k^{\frak a}_{abc}(x)\xi^a\xi^b\xi^c
+\ell_{a\frak b}^{\frak a}(x)\xi^a\zh^{\frak b})\p_{\zh^{\frak a}}\;;\ee it contains terms
of homological degrees $0,1,2$. The considered NQ-manifold induces all the data required by the setup of the higher derived brackets method. Indeed, let $$L=\op{Der}^{\bullet}\cA=\oplus_{\ell\ge -n}\op{Der}^{\ell}\cA=\oplus_{\ell\ge -n}\oplus_{s\ge -1}\,^{s}\!\op{Der}^{\ell}\cA\;$$ be the graded Lie algebra of
derivations of $\cA^{\bullet}=\oplus_k\cA^k$ with bracket denoted by $[-,-]$. Observe that here we consider the standard grading, but that, see Proposition \ref{BigradDer}, that the space $L$ is also graded by the homological degree. The graded commutator $[-,-]$ respects this homological degree as well. In fact, for $\zd\in\,^{r}\!\op{Der}\cA$ and
$\zd'\in\,^{s}\!\op{Der}\cA$, we have
$$[\tilde\ze,[\zd,\zd']]=[[\tilde\ze,\zd],\zd']+[\zd,[\tilde\ze,\zd']]=(r+s)[\zd,\zd']\;,$$where $\tilde\ze\in\op{Der}^0\cA$ is the Euler field that encodes the homological degree of `functions' and `vector fields', see Section \ref{Gradings}. Denote now by $P:L\to L$ the projector
onto $^{-1}\!\op{Der}\cA$. Hence, $V:=P(L)=\,
^{-1}\!\op{Der}\cA\simeq \zG(E)$, see Lemma \ref{-1Der}, is an abelian Lie
subalgebra of $L$. Moreover, if $\zd=\sum_{r\ge -1}\,^r\!\zd\in L$
and $\zd'=\sum_{s\ge -1}\,^{s}\!\zd'\in L$, we have
$$P[\zd,\zd']=[\,^{-1}\!\zd,\,^0\!\zd']+[\,^0\!\zd,\,^{-1}\!\zd']=P[P\zd,\zd']+P[\zd,P\zd']\;.$$Let
now $D:=[Q,-]$ be the interior degree 1 derivation of $L$ induced
by $Q$. It respects the kernel $K:=\op{Ker}P=\oplus_{r\ge 0}
\,^r\!\op{Der}\cA$. Indeed,
$Q\in\op{Der}^1\cA=\oplus_{s\ge -1}\,^s\!\op{Der}^1\cA$ reads
$Q=\sum_{s=0}^n\,^{s}\!Q$, see e.g. Equation (\ref{QDEG1}), and,
if $\zk=\sum_{r\ge 0}\,^r\!\zk\in K$, we get
$D\zk=\sum_{r}\sum_s[\,^{s}\!Q,\,^r\!\zk]\in K.$ As
$D^2=[Q,[Q,-]]=0$, the higher derived brackets
$$\ell'_k(X_1,\ldots,X_k)=P[\ldots[[Q,X_1],X_2],\ldots,
X_k]\;$$provide a $L_{\infty}$-antialgebra structure on $V=\zG(E).$ These $k$-ary brackets are actually given by
\be\label{DefQ2DerBrack}\ell'_k(X_1,\ldots,X_k)=P\sum_{s=0}^n[\ldots[[\,^{s}\!Q,X_1],X_2],\ldots, X_k]=[\ldots[[\,^{k-1}\!Q,X_1],X_2],\ldots,
X_k]\;,\ee for $1\le k\le n+1$, and they vanish otherwise. In view of Proposition \ref{LSVLinfty}, the brackets $\ell_k:=s\,\ell'_k\,(s^{-1})^k$ endow $\zG(sE)$ with a Lie infinity structure.

To discover the algebroid structure encrypted in the homological vector field of an N-manifold, it remains to extract the information contained in the action of $Q$ on the generators of degree 0, i.e. on $\cA^0=\,^0\!\cA=\Ci(M)$. Since, $^{s}\!Q:\Ci(M)\to \zG(E^*_{-1})\cap\, ^{s}\!\cA\;,$ the derivation $^s\!Q$ vanishes on functions, if $s\neq 1$, whereas $$^{1}\!Q :\Ci(M)\to \zG(E^*_{-1})\;.$$ For any $X\in\zG(E_{-1})$ and any $f\in\Ci(M)$, we set \be\label{DefQ1DerBrack}\zr'(X)f:=[^1\!Q,X]f=i_X\,^1\!Q f=-(^1\!Q f)(X)\in\Ci(M)\;.\ee As $[^1\!Q,X]\in\,^0\!\op{Der}\cA$ restricts to a derivation $[^1\!Q,X]\in \op{Der}\cA^0$, $\zr'$ is a $\Ci(M)$-linear map $\zr':\zG(E_{-1})\to \zG(TM)$ and can thus be viewed as a bundle map $\zr':E_{-1}\to TM$. Moreover, if $X_j\in\zG(E)$ and $f\in \Ci(M)$, the bracket $$\ell'_k(X_1,\ldots,fX_j,\ldots,X_k)=[\ldots[\ldots[\,^{k-1}\!Q,
X_1],\ldots,fX_j],\ldots, X_k]\;$$can be computed as follows. It
is easily seen that $$[\ldots[\,^{k-1}\!Q,
X_1],\ldots,fX_j]=\lp[\ldots[\,^{k-1}\!Q, X_1],\ldots,X_{j-1}]f\rp
\odot i_{X_j}+f[\ldots[\,^{k-1}\!Q, X_1],\ldots,X_j]$$and
$$[\ldots[\,^{k-1}\!Q, X_1],\ldots,X_{j-1}]f=\pm i_{X_{j-1}}\ldots
i_{X_1}\,^{k-1}Qf\;.$$If $k=2$ and $j=2$ (for $j=1$ it suffices to
use the symmetry of the bracket), we thus find
$$\ell'_2(X_1,fX_2)=(i_{X_1}\,^1\!Qf)\;X_2+f\ell'_2(X_1,X_2)\;.$$The first term
is nonzero only if $X_1\in\zG(E_{-1})$, in which case it reads $(\zr'(X_1)f)X_2$. If $k\neq 2,$ we get
$$\ell'_k(X_1,\ldots,fX_j,\ldots,X_k)=f\ell'_k(X_1,\ldots,X_j,\ldots,X_k)\,,$$as $X_{m}\simeq i_{X_m}$ is $\Ci(M)$-linear. If we set now $\zr=\zr's^{-1}$, we obtain a bundle map $\zr:(sE)_0\to TM$, such that the brackets $\ell_i=s\,\ell_i'\,(s^{-1})^i$, $i\neq 2$, are $\Ci(M)$-multilinear, whereas $\ell_2$ is $\Ci(M)$-bilinear if both arguments are elements of $\zG((sE)_{-i})$, $i\neq 0$, and verifies $$\ell_2(x_0,fx)=f\ell_2(x_0,x)+(\zr(x_0)f)x\;,$$for any $x_0\in\zG((sE)_0)$, $x\in\zG(sE)$, and $f\in\Ci(M)$.\medskip

\noindent{\bf Remark 4.} We thus recover the concept of split Lie $n$-algebroid introduced by Sheng and Zhu in \cite{SZ11}. In fact, we proved that to any split NQ-manifold of degree $n$ is associated a split Lie $n$-algebroid.

\begin{defi}\label{DefLieNAld} A \emph{split Lie $n$-algebroid}, $n\ge 1$, is a graded vector bundle $L=\oplus_{i=0}^{n-1}L_{-i}$ over a smooth manifold $M$, together with a bundle map $\zr:L_0\to TM$ and graded antisymmetric $i$-linear brackets $\ell_i:\zG(L)^{\times i}\to\zG(L)$, $i\in\{1,\ldots,n+1\}$, of degree $2-i$, such that \begin{itemize}\item for any $r\ge 1$, \be\sum_{i+j=r+1}\sum_{\zs\in\op{Sh}(i,j-1)}(-1)^{i(j-1)}\chi(\zs)\ell_{j}(\ell_i(x_{\zs_1},\ldots,x_{\zs_i}),x_{\zs_{i+1}},\ldots,x_{\zs_r})=0\;,\label{LSLInfty}\ee where $x_1,\ldots,x_r\in \zG(L)$, where $\op{Sh}(i,j-1)$ denotes the set of $(i,j-1)$-shuffles, and where
$\chi(\zs)=\op{sign}(\zs)\ze(\zs)$ is the signature multiplied by
the Koszul sign with respect to the grading of $\zG(L)$, \item for $i\neq 2$, $\ell_i$ is $\Ci(M)$-multilinear, whereas $\ell_2$ is $\Ci(M)$-bilinear if both arguments belong to $\zG(L_{-i})$, $i\neq 0$, and verifies $$\ell_2(x_0,fx)=f\ell_2(x_0,x)+(\zr(x_0)f)\,x\;,$$ for any $x_0\in\zG(L_0)$, $x\in\zG(L)$, and $f\in\Ci(M)$.\end{itemize} \end{defi}

Observe that, since the graded vector bundle underlying a split Lie $n$-algebroid is concentrated in degrees $0,\ldots,-n+1$, the Lie infinity algebra conditions (\ref{LSLInfty}) are nontrivial only for $1\le r\le n+2$. Indeed, if $r\ge n+3$, the degree of the {\small LHS}-terms is at most $2-i+2-j=3-r\le -n$, so that all the terms vanish.\medskip

\noindent{\bf Examples 5.} A Lie 1-algebroid is a Lie algebroid in the usual sense. Indeed, $\ell_1$ vanishes, as it is of degree 1, and the Lie infinity algebra conditions reduce to the Jacobi identity. Further, a Lie $n$-algebroid over a point is exactly a Lie $n$-algebra, i.e. an $n$-term Lie infinity algebra.\medskip

\noindent{\bf Remark 5.} As in the case of Lie infinity algebras, there exists a notion of {\it Lie $n$-antialgebroid}. Such an antialgebroid is made up by a graded vector bundle $K=\oplus_{i=1}^nK_{-i}$ over a manifold $M$, a bundle map $\zr':K_{-1}\to TM$, and a $L_{\infty}$-antialgebra structure $\ell'_i$, $1\le 1\le n+1$, on $\zG(K)$, such that the $\ell_i'$ are $\Ci(M)$-multilinear for $i\neq 2$, whereas $\ell'_2$ is $\Ci(M)$-bilinear if both arguments belong to $\zG(K_{-i})$, $i\neq 1$, and verifies $$\ell_2'(x_{1},fx)=f\ell_2'(x_{1},x)+(\zr'(x_{1})f)x\;,$$ for all $x_{1}\in\zG(K_{-1}),$ $x\in\zG(K)$, and $f\in\Ci(M)$. To any Lie $n$-antialgebroid structure $\ell'_i,\zr'$ on $K$ corresponds a Lie $n$-algebroid structure $\ell_i=s\,\ell_i'\,(s^{-1})^i$, $\zr=\zr's^{-1}$ on $sK$, and vice versa.\medskip

\noindent{\bf Remark 6.} The orders of the brackets $\ell_i$, viewed as differential operators, are comprehensible from the above construction of a Lie $n$-algebroid by means of derived brackets. Only the binary bracket $\ell_2$ has an anchor, since $^s\!Q$ that implements $\ell_{s+1}$ vanishes on functions for $s\neq 1$. The reader might prefer an explanation in local coordinates. Consider first the case $n=1$ of Lie algebroids. When dualizing the Lie bracket and the anchor in a local trivialization over a chart domain, we get the homological vector field
$$Q=\zr^{i}_a(x)\xi^a\p_{x^{i}}+C^a_{bc}(x)\xi^b\xi^c\p_{\xi^a}\;,$$where $x^{i}$ (resp., $\xi^a$) are the base coordinates (resp., the fiber coordinates), and where $\zr^{i}_a(x)$ (resp., $C^a_{bc}(x)$) are the components
(resp., the structure functions) of the anchor (resp., of the bracket). Note that {the anchor is thus encrypted in the terms in the $\p_{x^i}$}. If we pass to $n=2$ and choose local coordinates $(x^i,\xi^a,\zh^{\frak a})$, a homological vector field is, as mentioned above, of the type $$Q=\sum
f_a^{i}(x)\xi^a\p_{x^{i}}+\sum (g^a_{bc}(x)\xi^b\xi^c
+h^a_{\frak a}(x)\zh^{\frak a})\p_{\xi^a}+\sum (k^{\frak a}_{abc}(x)\xi^a\xi^b\xi^c
+\ell_{a\frak b}^{\frak a}(x)\xi^a\zh^{\frak b})\p_{\zh^{\frak a}}\;,$$so that only $^1\!Q$ that defines $\ell_2$ contains such terms.

\subsection{Categories of Lie $n$-algebroids and NQ-manifolds: comparison of objects}

\begin{theo}\label{MTheo2} {There is a 1:1 correspondence between split NQ-manifolds $(E,Q)$ of degree $n$ and split Lie
$n$-algebroids $(sE,(\ell_i)_i,\zr)$}.\end{theo}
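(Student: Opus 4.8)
The plan is to establish the correspondence by showing that the two constructions already assembled in the excerpt are mutually inverse. The forward direction is essentially complete: starting from a split NQ-manifold $(E,Q)$ of degree $n$, the higher derived brackets construction with $L=\op{Der}^{\bullet}\cA$, projector $P$ onto $^{-1}\!\op{Der}\cA\simeq\zG(E)$, and derivation $D=[Q,-]$ produces an $L_{\infty}$-antialgebra structure $\ell'_k$ on $\zG(E)$ together with the anchor $\zr'$ extracted from $^1\!Q$, and Proposition \ref{LSVLinfty} transports this to a genuine $L_{\infty}$-algebra structure $\ell_k=s\,\ell'_k\,(s^{-1})^k$ with anchor $\zr=\zr's^{-1}$ on $\zG(sE)$. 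So first I would record that this assignment $(E,Q)\mapsto(sE,(\ell_i)_i,\zr)$ lands in the class of split Lie $n$-algebroids of Definition \ref{DefLieNAld}: the Jacobi-type identities (\ref{LSLInfty}) hold because they are the image under Proposition \ref{LSVLinfty} of the antialgebra relations (\ref{VLInfty}), which in turn follow from $D^2=0$; the $\Ci(M)$-multilinearity for $i\neq2$ and the Leibniz rule for $\ell_2$ were verified in the computations preceding Definition \ref{DefLieNAld}; and the degrees $2-i$ and the range $i\in\{1,\ldots,n+1\}$ match the vanishing of the derived brackets for $k>n+1$ noted in (\ref{DefQ2DerBrack}).

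The substantive work is the reverse direction: given a split Lie $n$-algebroid $(sE,(\ell_i)_i,\zr)$, I must reconstruct a homological vector field $Q\in\op{Der}^1\cA$ on $\cA=\zG(\odot E^*)$ and check that the derived-bracket recipe applied to this $Q$ returns the brackets and anchor I started with. The natural route is to build $Q$ piece by piece in homological degree: define $^s\!Q$ for $s=0,\ldots,n$ by dualizing the corestriction $\ell'_{s+1}$ of the antialgebra obtained from $(\ell_i)_i$ via the second half of Proposition \ref{LSVLinfty}, together with the anchor contribution $^1\!Q$ determined on $\cA^0=\Ci(M)$ by $(^1\!Qf)(X)=-\zr'(X)f$ as in (\ref{DefQ1DerBrack}). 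Because a vector field is a derivation, it is fixed by its action on the generators, so $Q$ is completely specified by prescribing, for each generator $\zw_k\in\zG(E^*_{-k})$, the element $Q\zw_k$ via the adjoint (interior-product) pairing $i_X(Q\zw_k)=\pm\ell'(\,\cdots,X,\cdots)$; Lemma \ref{-1Der} guarantees that this pairing against all $X\in\zG(E)$ determines $Q\zw_k$ unambiguously. I would then verify that the resulting $Q$ has standard degree $1$ and raises homological degree, using the bookkeeping from Proposition \ref{BigradDer}.

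The key step, and the one I expect to be the main obstacle, is proving $Q^2=0$, i.e. that the algebroid relations (\ref{LSLInfty}) are not merely implied by $Q^2=0$ but are equivalent to it. One direction of this equivalence is routine from the derived-bracket formula; the converse requires showing that the $\Ci(M)$-linearity and Leibniz conditions of Definition \ref{DefLieNAld}, together with the shuffle identities, force $[Q,Q]$ to annihilate every generator, including the degree-$0$ generators where the anchor enters. The delicate point is the interaction between the anchor term $^1\!Q$ (which alone fails to be $\Ci(M)$-linear, acting on functions) and the higher $^s\!Q$: one must check that the Leibniz rule $\ell_2(x_0,fx)=f\ell_2(x_0,x)+(\zr(x_0)f)x$ is exactly the data needed to make the non-$\Ci(M)$-linear contributions to $Q^2$ on $\Ci(M)$ cancel. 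A clean way to organize this is to observe that the correspondence $Q\leftrightarrow(\ell'_k,\zr')$ is a bijection at the level of \emph{coderivations} versus \emph{derivations of standard degree $1$ that preserve the homological filtration}, and that under this bijection the square-zero condition on each side matches term by term in homological degree; then $Q^2=0\Leftrightarrow\ell'^2=0\Leftrightarrow$ (\ref{VLInfty}) $\Leftrightarrow$ (\ref{LSLInfty}). Finally I would confirm that the two assignments are mutually inverse by tracing a generator through both constructions and invoking the uniqueness of the decomposition $Q=\sum_s{}^s\!Q$ from Proposition \ref{BigradDer}, together with the fact that the derived brackets of the reconstructed $Q$ reproduce $\ell'_k$ by construction.
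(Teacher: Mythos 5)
Your overall architecture matches the paper's: the forward direction is delegated to the derived-brackets construction already carried out, and the reverse direction builds $Q$ on generators by dualizing the antialgebroid data $\ell'_r,\zr'$ and extends it as a derivation. However, there is a genuine gap at the step you yourself flag as the main obstacle, namely $Q^2=0$. Your proposed ``clean way to organize this'' --- that $Q\leftrightarrow(\ell'_k,\zr')$ is a bijection between coderivations and degree~$1$ derivations under which squaring to zero matches term by term, so that $Q^2=0\Leftrightarrow\ell'^2=0\Leftrightarrow(\ref{VLInfty})$ --- is false over a nontrivial base. The derivation $Q$ is \emph{not} the dual of the coderivation determined by the $\ell'_k$: it contains the extra first-order (anchor) term $-\zr'\odot\zw_{k,s}$, precisely because $\ell'_2$ fails to be $\Ci(M)$-bilinear while $\zG(\odot E^*)$ is a symmetric algebra over $\Ci(M)$, not over $\R$. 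Consequently $Q^2$ evaluated on a generator $\zw_k$ produces, besides the sum that vanishes by the $L_\infty$-antialgebra identity (\ref{VLInfty}), cross-terms between the anchor part and the bracket part (the first and fourth sums in (\ref{Signs2}) of the paper). These do not cancel by any coalgebraic identity; their vanishing requires the two additional compatibility relations $\zr'\circ\ell'_1=0$ on $\zG(E_{-2})$ and $\zr'(\ell'_2(X,Y))=[\zr'(X),\zr'(Y)]$ for $X,Y\in\zG(E_{-1})$ (equations (\ref{ReprCond1}), (\ref{ReprCond2})), which must be \emph{derived} from the $L_\infty$-conditions applied to function-multiplied arguments together with the Leibniz rule of Definition \ref{DefLieNAld}. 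This derivation, plus the sign bookkeeping showing which of the remaining sums cancel pairwise, is the actual content of the paper's proof of $Q^2=0$ and is absent from your argument. (Your duality claim would be correct over a point, i.e.\ for Lie $n$-algebras, where the anchor vanishes --- but that is exactly the case in which the theorem is classical.)

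Two smaller omissions: when you define $Q\zw_k$ by its pairing against sections, you must verify that the resulting multilinear map is $\Ci(M)$-multilinear (so that it is a tensor, i.e.\ an element of $\cA^{k+1}$); this is automatic except for the two-argument components, where it again uses the Leibniz rule for $\ell'_2$. And when checking that the two assignments are mutually inverse, the derived brackets $\ell''_r$ of the reconstructed $Q$ agree with $\ell'_r$ only up to the sign $(-1)^r$ (equation (\ref{Dual-DerBrack})), which has to be tracked rather than asserted ``by construction.''
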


\begin{proof} Let us first mention that the proof uses the characterization of tensor fields as function-valued function-multilinear maps.

Let now $\ell_1,\ldots,\ell_{n+1},\zr$ be a Lie $n$-algebroid structure on a graded vector bundle $sE=(sE)_0\oplus\ldots\oplus (sE)_{-n+1}$ over a manifold $M$. We will define on the degree $n$ split N-manifold $E[\cdot]=\oplus_{i=1}^nE_{-i}[i]$ a derivation $Q\in \op{Der}^1\cA$ (or, better, a global section of the sheaf of degree 1 derivations) that squares to 0. Remember that $\cA$ (here, the algebra of global sections of the function sheaf) is given by $$\cA=\zG(\odot E^*)$$ $$=\Ci(M)\oplus \zG(E^*_{-1})\oplus \lp
\zG(\odot^2E^*_{-1})\oplus\zG(E^*_{-2})\rp\oplus \lp
\zG(\odot^3E^*_{-1})\oplus
\zG(E^*_{-1}\odot E^*_{-2})\oplus\zG(E^*_{-3})\rp\oplus\ldots\;$$

We first define the derivation $Q$ on the generators $\zw_k\in\zG(E^*_{-k})$, $k\in\{1,\ldots,n\}$, and $\zw_0\in\Ci(M)$. We decompose $Q\zw_k\in\cA^{k+1}$ with respect to the homological grading $\cA^{k+1}=\oplus_{r=1}^{k+1}\;^r\!\cA^{k+1}$ given by the number of generators. The component in $^r\!\cA^{k+1}$ will be denoted by $Q^{k+1,r}\zw_k$. For instance, $$Q^{4,2}\zw_3\in\zG(E^*_{-1}\odot E^*_{-3})\oplus \zG(\odot^2E^*_{-2})\;.$$Hence, we have to define, for $X_1\in\zG(E_{-1})$ and $X_2\in\zG(E_{-3})$ or $X_1,X_2\in\zG(E_{-2})$, $$(Q^{4,2}\zw_3)(X_1,X_2)\in\Ci(M)\;,$$in a way that this function depend $\Ci(M)$-bilinearly and graded symmetrically on its arguments. More generally, we define $(Q^{k+1,r}\zw_k)(X_1,\ldots,X_r)\in\Ci(M)$, for all $X_j\in\zG(E_{-a_j})$ such that $\sum_ja_j=k+1.$ We set \be\label{DefQ0}\zr'=\zr s,\quad \ell_r'=(-1)^{r(r-1)/2}s^{-1}\ell_rs^r\;,\ee so that $\ell'_r,\zr'$ provide a Lie $n$-antialgebroid structure on $E$. We now define \be(Q^{1,1}\zw_0)(X_1)=-\zr'(X_1)\zw_0\in\Ci(M)\;\label{DefQ1}\ee and, for $k\in\{1,\ldots,n\}$, \be(Q^{k+1,r}\zw_k)(X_1,\ldots,X_r)=(-1)^{k}\zw_k(\ell'_r(X_1,\ldots,X_r))\in\Ci(M)\;,\label{DefQ2}\ee if $r\in\{1,3,\ldots,k+1\}$, and \be(Q^{k+1,2}\zw_k)(X_1,X_2)=(-1)^{k}\zw_k(\ell_2'(X_1,X_2))-(\zr'\odot\zw_k)(X_1,X_2)\in\Ci(M)\;,\label{DefQ3}\ee if $r=2$. The tensor product in the last equation is given by $$(\zr'\odot\zw_k)(X_1,X_2)=(-1)^k\zr'(X_1)\zw_k(X_2)+(-1)^{a_1a_2+k}\zr'(X_2)\zw_k(X_1)\in\Ci(M)\;.$$ Here (and in the following) we implicitly extend the anchor $\zr':\zG(E_{-1})\to \zG(TM)$ and $\zw_k:\zG(E_{-k})\to \Ci(M)$ by 0 to the whole module $\zG(E)$. % Hence, the first (resp., second) term of the {\small RHS} differs from 0 only if $(a_1,a_2)=(1,k)$ (resp., $(a_1,a_2)=(k,1)$), so that both terms do not identically vanish only if $(a_1,a_2)=(1,1)$.

Graded symmetry is obvious and $\Ci(M)$-multilinearity is nontrivial only for $r=2$. % and $(a_1,a_2)=(1,k)$ or $(a_1,a_2)=(k,1)$. Let us examine e.g. the case $r=2$, $(a_1,a_2)=(1,1)$ (this entails that $k=1$).
Since $$\ell'_2(X,fY)=f\ell'_2(X,Y)+(\zr'(X)f)Y\quad\text{and}\quad\ell'_2(fX,Y)=f\ell'_2(X,Y)+(-1)^X(\zr'(Y)f)X\;,$$for all $X,Y\in\zG(E)$, the function $(Q^{k+1,2}\zw_k)(X_1,X_2)$ is actually $\Ci(M)$-bilinear.

%\noindent{\bf Remark.} The sign $(-1)^{r(r-1)/2}$ might not be present. In this case it seems sufficient to change the signs of both terms in $Q^{k+1,2}\zw_k$.\medskip

To define $Q$ on an arbitrary element $\zw=\sum_{k,s}\zw_{k,s}\in\zG(\odot E^*)=\cA=\oplus_{k,s}\, ^s\!\cA^k$,  we define the projections $Q^{k+1,r}\zw_{k,s}$ of $Q\zw_{k,s}\in\cA^{k+1}=\oplus_{r=1}^{k+1}\,^r\!\cA^{k+1}$ onto $^r\!\cA^{k+1}$.

\begin{defi}\label{LieNAldCE} Let $\ell_i,\zr$ be a Lie $n$-algebroid structure on $sE$ and let $\ell_i'=(-1)^{i(i-1)/2}s^{-1}\ell_i\,s^i, \zr'=\zr\,s$ be the associated Lie $n$-antialgebroid data on $E$. The derivation $Q\in\op{Der}^1\zG(\odot E^*)$, which is defined by \be Q^{k+1,r}\zw_{k,s}=(-1)^k\zw_{k,s}\circ (\ell'_{r-s+1}\odot\op{id}_{s-1})-\zr'\odot\zw_{k,s}\;,\label{LieNAldCEOper}\ee where $\op{id}_{s-1}(X_1,\ldots,X_{s-1})=X_1\odot\ldots\odot X_{s-1}$, is the \emph{Chevalley-Eilenberg differential} of the Lie $n$-algebroid $sE.$\end{defi}

Equation (\ref{LieNAldCEOper}) can be written more explicitly. For $k=0$, we get \be\label{DefQ1'}Q^{k+1,r}\zw_{k,s}=-\zr'\odot\zw_{k,s}\;,\ee
for $k\ge 1$, if $r\ge s$, $r\neq s+1$, \be\label{DefQ2'} Q^{k+1,r}\zw_{k,s}=(-1)^{k}\,\zw_{k,s}\circ(\ell'_{r-s+1}\odot \op{id}_{s-1})\;,\ee if $r=s+1$, \be\label{DefQ3'} Q^{k+1,r}\zw_{k,s}=(-1)^{k}\,\zw_{k,s}\circ(\ell'_2\odot\op{id}_{s-1})-\zr'\odot\zw_{k,s}\;,\ee and if $r<s$, \be\label{DefQ4'} Q^{k+1,r}\zw_{k,s}=0\;.\ee

Indeed, if $k=0$, then $s=0$ and $\op{id}_{s-1}=0$, so that (\ref{LieNAldCEOper}) reduces to (\ref{DefQ1'}). Equations (\ref{DefQ2'}) and (\ref{DefQ3'}) are clear as well, as the term $\zr'\odot\zw_{k,s}$ can be evaluated only on $s+1$ sections of $E$ (and must be interpreted as 0 on $r\neq s+1$ sections). For (\ref{DefQ4'}), it suffices to note that $\ell'_i=0$, if $i\le 0$.

Let us briefly comment on the definitions (\ref{DefQ1'})-(\ref{DefQ4'}). Equation (\ref{DefQ1'}) is just a reformulation of (\ref{DefQ1}). As for (\ref{DefQ2'}) and (\ref{DefQ3'}), note that the argument $\zw_{k,s}$ is an element of $^s\!\cA^k$ and more precisely, say, of $\zG(E^*_{-c_1}\odot\ldots\odot E^*_{-c_s})$, $\sum_jc_j=k$, (for example $\zG(E^*_{-1}\odot E^*_{-3})$). Its image $Q^{k+1,r}\zw_{k,s}\in\, ^r\!\cA^{k+1}$ must be evaluated on $X_j\in\zG(E_{-a_j})$, $j\in\{1,\ldots,r\}$, $\sum_ja_j=k+1$. The computation of $\ell'_{r-s+1}\odot \op{id}_{s-1}$ on these $X_j$ leads to terms each of which belongs to some $\zG(E_{-b_1})\odot\ldots\odot\zG(E_{-b_s})$, $\sum_jb_j=k$, since $\ell'_{r-s+1}$ has degree 1 (in the example, $\zG(E_{-1})\odot\zG(E_{-3})$ or $\zG(E_{-2})\odot\zG(E_{-2})$). In (\ref{DefQ2'}),(\ref{DefQ3'}) it is understood that the evaluation of $\zw_{k,s}$ on those terms that do not match is 0 by definition. Graded symmetry and $\Ci(M)$-multilinearity are again straightforwardly checked.

To make Definition \ref{LieNAldCE} meaningful (and to complete the proof), we still have to show that $Q$ is a derivation and that $Q^2=0$.

As concerns the derivation property, observe that it follows from (\ref{CoDerWedge}) and (\ref{LieNAldCEOper}) that, for $\zr'=0$, the endomorphism $Q$ is actually a derivation. However, the map $\zw_{k,s}\mapsto\zr'\odot\zw_{k,s}$ is a derivation as well. Indeed, let $\zh_{\ell,t}\in\, ^t\!\cA^{\ell}$ and note that the tensor products in $\zr'\odot (\zw_{k,s}\odot \zh_{\ell,t})$ are defined differently. The first one is defined by means of vector fields that act on functions (we will use the notation $L$), the second by means of products of functions (notation $\cdot$). When omitting the arguments $X\in\zG(E)$ and the subscripts, we can write $$\zr'\odot (\zw\odot \zh)=\sum\pm L_{\zr'}(\zw \cdot \zh)=\sum\pm (L_{\zr'}\zw)\cdot \zh +\sum\pm \zw\cdot (L_{\zr'}\zh)$$ $$=(\zr'\odot\zw)\odot\zh+(-1)^{k}\zw\odot(\zr'\odot\zh)\;.$$ Eventually, $Q\in\op{Der}^1\cA$.

Below, we will explain that the Chevalley-Eilenberg complex of a Lie $n$-algebroid, Definition \ref{LieNAldCE}, `reduces' for $n=1$ to the Lie algebroid de Rham complex.

We prove now that $Q^2=0$, which holds true if it holds on the generators $\zw_k\in\zG(E^*_{-k})$, $1\le k\le n$, and $\zw_0\in\Ci(M)$. To increase readability we work first up to sign. In the addendum to the proof, the interested reader can find the details about signs. It suffices to show that the sum \be\ S=\sum_{s=1}^{k+1}(Q^{k+2,r}Q^{k+1,s}\zw_k)(X_1,\ldots,X_r)\label{Sum}\ee vanishes, for all $X_j\in\zG(E_{-a_j})$, such that $\sum_ja_j=k+2$, and each $1\le r\le k+2.$ Ignoring the signs, we get $$S=\sum_{s=1}^{\op{inf}(k+1,r)}(Q^{k+1,s}\zw_k)((\ell'_{r-s+1}\odot\op{id}_{s-1})(X_1,\ldots,X_r))+(\zr'\odot (Q^{k+1,r-1}\zw_k))(X_1,\ldots,X_r)\;.$$ In the preceding sum, we can replace $\inf(k+1,r)$ by $r$, since $Q^{k+1,k+2}\zw_{k}=0.$ Setting $t=r-s+1$, we then obtain $$ S=\sum_{s+t=r+1}\sum_{\zs\in\op{Sh}(t,s-1)}(Q^{k+1,s}\zw_k)(\ell_t'(X_{\zs_1},\ldots,X_{\zs_t}), X_{\zs_{t+1}},\ldots, X_{\zs_r})$$ $$+ \sum_i\zr'(X_i)(Q^{k+1,r-1}\zw_k)(X_1,\ldots\hat{i}\ldots, X_r)\;$$ $$=\zw_k\lp\sum_{s+t=r+1}\sum_{\zs\in\op{Sh}(t,s-1)}\ell'_s(\ell_t'(X_{\zs_1},\ldots,X_{\zs_t}), X_{\zs_{t+1}},\ldots, X_{\zs_r})\rp$$ $$+\sum_{\zs\in\op{Sh}(r-1,1)}(\zr'\odot\zw_k)(\ell_{r-1}'(X_{\zs_1},\ldots,X_{\zs_{r-1}}), X_{\zs_r})$$ \be\label{Signs1}+\sum_i\zr'(X_i)\lp\zw_k(\ell'_{r-1}(X_1,\ldots\hat{i}\ldots, X_r))+\zd_{r,3}(\zr'\odot \zw_k)(X_1,\ldots\hat{i}\ldots, X_r)\rp\;,\ee where the first term vanishes in view of the $L_{\infty}$-antialgebra condition (\ref{VLInfty}). Hence, $$S=\sum_i\zr'(\ell_{r-1}'(X_1,\ldots\hat{i}\ldots,X_r))\zw_k(X_i)+\sum_i\zr'(X_i)\zw_k(\ell_{r-1}'(X_1,\ldots\hat{i}\ldots,X_r))$$ \be\label{Signs2}+\sum_i\zr'(X_i)\zw_k(\ell'_{r-1}(X_1,\ldots\hat{i}\ldots, X_r))+\zd_{r,3}\sum_i\zr'(X_i)\lp(\zr'\odot \zw_k)(X_1,\ldots\hat{i}\ldots, X_r)\rp\;.\ee When taking signs into account, we see that the second and third sums cancel out. If no $a_i$ is equal to $k$, the first and fourth sums vanish as well. Otherwise, $a_1+\ldots \hat{i}\ldots +a_r=2$ and $r=2$ or $r=3$. If $r=2$, the first sum reads \be\label{Signs3}\zr'(\ell'_1(X_2))\zw_k(X_1)+\zr'(\ell'_1(X_1))\zw_k(X_2)\;,\ee where $a_1=k$ and $a_2=2$ or vice versa. It thus suffices to show that \be\label{ReprCond1}\zr'\circ\ell_1'=0\ee on $\zG(E_{-2})$. This conclusion follows from the $L_{\infty}$-condition $$\ell'_1(\ell'_2(X,fY))+\ell'_2(\ell'_1(X),fY)+(-1)^X\ell'_2(X,\ell'_1(fY))=0\;,$$written for $X\in\zG(E_{-2})$. If $r=3$, the first and fourth sums exist, so that \be\label{Signs4}S=\zr'(\ell'_2(X_2,X_3))\zw_k(X_1)+\ldots + \zr'(X_2)\lp \zr'(X_1)\zw_k(X_3)+\zr'(X_3)\zw_k(X_1)\rp+\ldots\;,\ee where one of the $a_i$ is equal to $k$ and the two others equal to $1$. It is easily seen that it suffices to prove that \be\label{ReprCond2}\zr'(\ell'_2(X,Y))=\zr'(X)\zr'(Y)-\zr'(Y)\zr'(X)\;,\ee for all $X,Y\in\zG(E_{-1})$. The result is encoded in the $L_{\infty}$-condition for brackets $\ell'_i,\ell'_j$, $i+j=4$, written for $X,Y,fZ$, with $X,Y\in\zG(E_{-1})$. This is straightforwardly checked (we actually obtain the same property for $\zr$ and $\ell_2$). This completes the construction of an NQ-manifold from a Lie $n$-algebroid.\medskip

Conversely, we can construct a Lie $n$-algebroid from an NQ-manifold $(E,Q)$. Indeed, the definitions (\ref{DefQ1})-(\ref{DefQ3}) can easily be inverted. Equation (\ref{DefQ1}) defines the anchor $\zr'$ from $Q$. Let now $X_j\in\zG(E_{-a_j})$, $1\le j\le r$, set $k:=\sum a_j-1$, and let $\zw_k\in\zG(E^*_{-k}).$ Equation (\ref{DefQ2}) gives, for $r\neq 2$, $$\lp\ell'_r(X_1,\ldots,X_r)\rp(\zw_k)=(Q^{k+1,r}\zw_k)(X_1,\ldots,X_r)\;,$$ since $(-1)^{k(k+1)}=1$. Equation (\ref{DefQ3}) provides $\left(\ell'_2(X_1,X_2)\right)(\zw_k)$. Clearly, $\zr'$ coincides with the anchor (\ref{DefQ1DerBrack}), say $\zr''$, defined in the construction of a Lie $n$-algebroid via higher derived brackets. Moreover, if we denote the higher brackets (\ref{DefQ2DerBrack}) by $\ell''_r$, we have \be\label{Dual-DerBrack}\ell_r'=(-1)^r\ell_r''\;.\ee Indeed, when computing $$ \lp\ell_r''(X_1,\ldots,X_r)\rp(\zw_k)=[\ldots[[^{r-1}\!Q,X_1],X_2],\ldots,X_r](\zw_k)\;,$$ we get terms of the type $i_{X_{\zs_1}}\ldots i_{X_{\zs_j}}\,^{r-1}\!Q\, i_{X_{\zs_{j+1}}}\ldots i_{X_{\zs_r}}\zw_k$. However, if $j$ differs from $r$ and $r-1$, such a term vanishes. Even for $j=r-1$, it vanishes, except if $a_{\zs_r}=k,$ in which case we have $r=2$, since $\sum a_j=k+1$. If $r\neq 2$, the derived bracket $\ell''_r$ is given by a unique term. It suffices to compute the sign of this interior product and to insert the sections $X_j$ into $^{r-1}\!Q\zw_k$, i.e., if we change notation, into $Q^{k+1,r}\zw_k$, which generates new signs. Combining all these signs, we actually get $(-1)^r$. If $r=2$, the bracket $\ell''_2$ contains three terms. The proof is just a matter of computation. Since $\ell''_r,\zr''$ define a Lie $n$-antialgebroid structure on $E$, the same holds obviously true for $\ell'_r,\zr'$, so that, to complete the proof, it suffices to consider the associated Lie $n$-algebroid $(sE, (\ell_r)_r,\zr)$.\medskip

The constructions of a higher Lie algebroid from a higher Q-manifold and vice versa are of course inverses of each other. \end{proof}

\noindent{\bf Addendum.} The sign in the first term of (\ref{Signs1}) is $-\ze(\zs)$. If we denote, for simplicity, the degree of $X_j$ by $X_j$ instead of $-a_j$, those in the four terms of (\ref{Signs2}) are $$(-1)^{X_i(X_{i+1}+\ldots+X_r)+k(X_1+\ldots\hat{i}\ldots + X_r)}, (-1)^{k+X_i(X_1+\ldots+X_{i-1}+k+1)}\;,$$ $$(-1)^{k+1+X_i(X_1+\ldots+X_{i-1}+k+1)}, (-1)^{X_i(X_1+\ldots+X_{i-1}+k+1)}\;.$$ It is thus clear that the first term of (\ref{Signs1}) vanishes and that the second and third terms of (\ref{Signs2}) cancel. Moreover, (\ref{Signs3}) vanishes in view of (\ref{ReprCond1}), independently of the involved signs. When writing explicitly the terms of (\ref{Signs4}), we get for instance $$(-1)^{(X_1+k)(X_2+X_3)}\left[\zr'(\ell'_2(X_2,X_3))\zw_k(X_1)\right.$$ $$\left.+(-1)^{X_2}\zr'(X_2)\zr'(X_3)\zw_k(X_1)+(-1)^{(X_2+1)X_3}\zr'(X_3)\zr'(X_2)\zw_k(X_1)\right]\;.$$ It now suffices to observe that this sum vanishes, if $X_1\neq -1$ or $X_2\neq -1$, and that otherwise it reads $$\zr'(\ell'_2(X_2,X_3))\zw_k(X_1)-\zr'(X_2)\zr'(X_3)\zw_k(X_1)+\zr'(X_3)\zr'(X_2)\zw_k(X_1)\;$$ and thus vanishes in view of (\ref{ReprCond2}).\medskip

\noindent{\bf Remark 7.} The preceding proof shows two facts: \begin{itemize} \item For any split Lie $n$-algebroid $(L,(\ell_r)_r,\zr)$ over a manifold $M$, the bundle map $\zr:L_{0}\to TM$ verifies $$\zr(\ell_2(X,Y))=[\zr(X),\zr(Y)]\;,$$ for all $X,Y\in\zG(L_0)$, where $[-,-]$ is the bracket of vector fields. In other words, the $n$-algebroid anchor is a representation on $\op{Vect}(M)$ of the Lie algebra (up to homotopy) bracket $\ell_2$ on $\zG(L_0)$.

\item Any Lie $n$-algebroid $(L,(\ell_r)_r,\zr)$ is implemented by higher derived brackets. Indeed, let $Q$ be the homological vector field associated to the Lie $n$-antialgebroid structure $\ell'_r=(-1)^{r(r-1)/2}s^{-1}\ell_rs^r$, $\zr'=\zr s$. From $Q$ we construct via higher derived brackets the antialgebroid structure $\ell_r'',\zr''$, and we reconstruct $\ell'_r,\zr'$. Hence, $\ell_r=s\ell'_r(s^{-1})^r=(-1)^rs\ell''_r(s^{-1})^r$ and $\zr=\zr's^{-1}=\zr'' s^{-1}$.  \end{itemize}

\noindent{\bf Remark 8.} For $n=1$, i.e. in the Lie algebroid case, the Chevalley-Eilenberg differential (\ref{LieNAldCEOper}) coincides with the de Rham differential of the considered Lie algebroid. More precisely, the shifting operator allows to interpret the Chevalley-Eilenberg differential $Q\in\op{Diff}^1\zG(\odot E^*)$ of the Lie $n$-algebroid $(sE,(\ell_r)_r,\zr)$ as differential $\tilde Q$ on $\zG(\boxdot(sE)^*)$. The computation is technical and will not be given here. If $\zh_{k,s}\in \zG((sE)^*_{-a_1+1}\boxdot\ldots\boxdot(sE)^*_{-a_s+1})$, $\sum a_j=k$, we find \be\tilde Q^{k+1,r}\zh_{k,s}=(-1)^{(r-s+1)(s-1)}\zh_{k,s}\circ (\ell_{r-s+1}\boxdot\op{id}_{s-1})-\zr\boxdot\zh_{k,s}\;,\label{LieNAldCEOperShift}\ee where $\op{id}_{s-1}(X_1,\ldots,X_{s-1})=X_1\boxdot\ldots\boxdot X_{s-1}$. In the case $n=1$, necessarily $s=k, r=k+1$, and $\zh_{k,s}=:\zh_{k}\in\zG(\w^k(sE)_0^*)$. It is easily seen that Equation (\ref{LieNAldCEOperShift}) then reduces to the usual de Rham cohomology operator.

\section{Geometry of Lie $n$-algebroid morphisms}

\subsection{General morphisms of Lie $n$-algebroids}

In this section, we define morphisms between Lie $n$-algebroids over different bases in terms of anchors and brackets. In the case $n=1$, we recover the notion of Lie algebroid morphism \cite{Mac05}, and for $n$-algebroids over a point, the new concept coincides with that of Lie infinity algebra morphism.\medskip

Let $E=\oplus_{i=1}^nE_{-i}$ (resp., $F=\oplus_{i=1}^nF_{-i}$) be a graded vector bundle over $M$ (resp., $N$). A graded vector bundle morphism (in the categorical sense, i.e. a vector bundle morphism of degree 0) $\zf'_r:\odot^rE\to F$, $r\ge 1$, is a smooth map over a smooth map $\zf_0:M\to N$, with linear restrictions to the fibers. For instance, $$\zf'_2:\w^2E_{-1,x}\to F_{-2,\zf_0(x)},\;\zf'_2:E_{-1,x}\0 E_{-2,x}\to F_{-3,\zf_0(x)}, ...$$are linear. Remark that if $r\ge n+1$, the highest degree in $\odot^rE$ is $-r\le -n-1<-n$, so that $\zf'_r$ is necessarily zero. A graded vector bundle morphism $\zf'_r:\odot^rE\to F$ can be viewed as a vector bundle morphism $\zf_r:\boxdot^rsE\to sF$ of degree $1-r$: $$\zf_r=s\,\zf'_r(s^{-1})^r\quad\text{and}\quad\zf'_r=(-1)^{r(r-1)/2}s^{-1}\zf_r\, s^r\;.$$

If $X_i\in\zG(E_{-a_i})$, $i\in\{1,\ldots,r\}$, then $\zf'_r\circ X:=\zf'_r\circ(X_1\odot\ldots\odot X_r)$ has obviously a decomposition of the form \be\label{PhiDecom}\zf'_r\circ X=\sum_jf_j^{X}\;\xi_{j}^{X}\circ\zf_0\;,\ee where the sum is finite, $f_j^X\in\Ci(M)$ and $\xi_j^X\in\zG(F_{-\sum a_i}).$ Indeed, it suffices to take as $\xi_j^X$ a finite generating family of sections in the $\Ci(N)$-module $\zG(F_{-\sum a_i})$. Furthermore, it is easily seen that the graded symmetric tensor product $\zf'_{t_1}\odot\ldots\odot\zf'_{t_r}$ of graded vector bundle morphisms is given as follows. If $X_i\in\zG(E_{-a_i})$, $i\in\{1,\ldots,t\}$, and $t_1+\ldots+t_r=t$, $t_j\neq 0$, then \be\label{PhiDecomp}(\zf'_{t_1}\odot\ldots\odot\zf'_{t_r})\circ(X_1,\ldots,X_t)=\sum_{\zs\in\op{Sh}(t_1,\ldots,t_r)}\sum_{j_1}\ldots\sum_{j_r}\ze(\zs)\;f_{j_1}^{X^{\zs^1}}\ldots f_{j_r}^{X^{\zs^r}}(\xi_{j_1}^{X^{\zs^1}}\odot\ldots\odot\xi_{j_r}^{X^{\zs^r}})\circ\zf_0\;,\ee where $\ze(\zs)$ is the Koszul sign.\medskip

Let $\ell_i,\zr$ (resp., $m_i,r$) be a Lie $n$-algebroid structure on $sE$ (resp., $sF$), and denote by $\ell'_i,\zr'$ (resp., $m'_i,r'$) the corresponding Lie $n$-antialgebroid structure on $E$ (resp., $F$).

\begin{defi}\label{LadMorDef} A \emph{morphism of Lie $n$-algebroids} between $sE$ and $sF$ is a family $\zf_r:\boxdot^rsE\to sF$, $1\le r\le n$, of degree $1-r$ vector bundle morphisms over a base map $\zf_0:M\to N$, such that \be\label{LadMorCond1}r'\circ\zf'_1=T\zf_{0}\circ\zr'\;,\ee as well as, for any $1\le t\le n+1$ and any homogeneous sections $X_i$ of $E$, $i\in\{1,\ldots,t\}$, with decompositions $$\zf'_r\circ X_I=\sum_jf_j^{X_I}\;\xi_{j}^{X_I}\circ\zf_0\;$$ (for any $r$ and any product $X_I:=X_{i_1}\odot\ldots\odot X_{i_r}$),  $$\sum_{r+s=t+1}\sum_{\zs\in\op{Sh}(s,r-1)}\ze(\zs)\;\zf'_r\circ(\ell'_s(X_{\zs_1},\ldots,X_{\zs_s})\odot X_{\zs_{s+1}}\odot\ldots\odot X_{\zs_t})$$ $$+\sum_{ij}(-1)^{\tilde X_i(\tilde X_1+\ldots +\tilde X_{i-1})+1}\;(\zr' (X_i)f_j^{X_1\ldots\hat\imath\ldots X_t})\xi_j^{X_1\ldots\hat\imath\ldots X_t}\circ\zf_0$$ \be\label{LadMorCond2}=\sum_{r=1}^t\frac{1}{r!}\sum_{\tiny\begin{array}{c}t_1+\ldots+t_r=t\\t_j\neq 0\end{array}}\sum_{\zs\in\op{Sh}(t_1,\ldots,t_r)}\sum_{j_1}\ldots\sum_{j_r}\ze(\zs)f_{j_1}^{X^{\zs^1}}\ldots f_{j_r}^{X^{\zs^r}}m'_r(\xi_{j_1}^{X^{\zs^1}},\ldots,\xi_{j_r}^{X^{\zs^r}})\circ\zf_0\;,\ee where $\tilde X_k$ denotes the degree of $X_k$.\end{defi}

Note that for $t\ge n+2$, the highest degree of the terms in Equation (\ref{LadMorCond2}) is $1-t\le -n-1<-n$, so that any term necessarily vanishes.\medskip

\noindent{\bf Remark 9.}\begin{itemize} \item This definition is the geometric translation of the natural supergeometric / algebraic definition of Lie $n$-algebroid morphisms, see below.

\item For $n=1$, the definition reduces to that of morphisms of Lie algebroids over different bases, see \cite{HM90}, \cite{BKS}, \cite{Mac05}.

Indeed, note first that for $n=1$, the maps $\zf'_r$, $r\neq 1$, vanish, as they are of degree 0. We already noticed that the same is true for $\ell'_r,m'_r$, $r\neq 2$.

For $t\neq 2$, Condition (\ref{LadMorCond2}) is trivial. To understand this claim, observe that the sum in the second row of (\ref{LadMorCond2}) (resp., the {\small RHS} of (\ref{LadMorCond2})) is constructed from the decomposition (\ref{PhiDecom}) of $\zf'_{t-1}\circ(X_1\odot\ldots\hat\imath\ldots\odot X_t)$ (resp., the decomposition (\ref{PhiDecomp}) of $(\zf'_{t_1}\odot\ldots\odot\zf'_{t_r})\circ(X_1,\ldots,X_t)$). It is now clear that the general term of the sum in the first row of (\ref{LadMorCond2}) is nonzero only if $r=1$ and $s=2$, hence if $t=2$; that the sum in the second row does not vanish only if $t=2$; that the {\small RHS} does not vanish only if $r=2$ and $t_1=t_2=1$, hence, if $t=2$.

Eventually, for $t=2$, Equation (\ref{LadMorCond2}) is easily written in terms of $\zf_1,\ell_2,\zr,m_2$. It then coincides with the similar condition in the aforementioned works.

\item A priori Definition \ref{LadMorDef} depends on the choice of the involved decompositions. However, it is known, at least in the Lie algebroid case $n=1$, that all the terms are well-defined, see \cite{BKS}, \cite{Mac05}. For $n> 1$, this fact is a consequence of Theorem \ref{LadMorTheo}, see below.\end{itemize}

Before continuing, we work out an equivalent version of the anchor condition (\ref{LadMorCond1}), which uses the decomposition (\ref{PhiDecom}). Let $g\in \Ci(N)$, let $X\in\zG(E_{-1})$, and let all the other objects be as above. Remember first that, if $Z_x\in T_xM$, $x\in M,$ we have $Z_x(g\circ \zf_0)=(d_{\zf_0(x)}g)((T_x\zf_0)Z_x)=((T_x\zf_0)Z_x)(g)$, and that, if $Y\in\op{Vect}(N)$, we get $Y_{\zf_0(x)}g=(Yg)(\zf_0(x))=(\zf_0^*(Yg))(x).$ Assume now that \be\label{PhiDecomp'}\zf'_1\circ X=\sum_jf^X_j\xi^X_j\circ\zf_0\;.\ee When using the just recalled results and taking into account the decomposition (\ref{PhiDecomp'}), we see that Equation (\ref{LadMorCond1}) is equivalent to the equation $$\lp\zr'(X)(\zf_0^*g)\rp_x= \zr'(X_x)(g\circ\zf_0)=((T_x\zf_0)(\zr'(X_x)))(g)=r'(\zf'_1X_x)(g)$$ \be\label{AnchorDecomp}=\sum_jf^X_j(x)r'(\xi^X_{j})_{\zf_0(x)}(g)=\lp\sum_jf_j^X\zf^*_0(r'(\xi_j^X)g)\rp(x)\;.\ee

\subsection{Base-preserving morphisms of Lie $n$-algebroids}

If $\zf_0:M\to N$ is a diffeomorphism, the Lie $n$-algebroid morphism conditions can be simplified. Indeed, identify the manifolds $M$ and $N$, so that $\zf_0=\op{id}$.

The anchor condition (\ref{LadMorCond1}) then reduces to \be r'\circ\zf'_1=\zr'\;,\label{LadMorCondSimpl1}\ee which is equivalent to $r'(\zf'_1\circ X)=\zr'(X)$, for all $X\in\zG(E)$, provided we define $\zr'$ and $r'$ by 0 in all degrees different from $-1$.

As for the condition (\ref{LadMorCond2}), let us work -- to simplify -- up to sign. Remember first that $m'_r$, $r\neq 2$, is $\Ci(N)$-multilinear and that $m'_2$ verifies, for any $f,g\in\Ci(N)$ and any $X,Y\in\zG(F)$, $$fg\,m'_2(X,Y)=m'_2(fX,gY)+f(r'(X)g) Y+g(r'(Y)f)X,$$where we use again the just mentioned extension of $r'$ by 0. The anchor terms in the {\small LHS} of (\ref{LadMorCond2}) then read \be\label{MorSimpl1}\sum_{i,\ell}(r'(\zf'_1\circ X_i)f_{\ell}^{X_{\hat\imath}})\xi_{\ell}^{X_{\hat\imath}}\;,\ee where $X_{\hat\imath}=X_1\ldots\hat\imath\ldots X_t$. In view of Equation (\ref{PhiDecomp}), the {\small RHS} of (\ref{LadMorCond2}) is given by \be\label{MorSimpl2}\sum_{r=1}^t\frac{1}{r!}\sum_{\tiny\begin{array}{c}t_1+\ldots+t_r=t\\t_j\neq 0\end{array}}m'_r\lp(\zf'_{t_1}\odot\ldots\odot\zf'_{t_r})\circ(X_1,\ldots,X_t)\rp+\ldots\;,\ee where $\ldots\,$ denote the anchor terms that appear if $r=2$.

If $t=1$, there are no such terms; on the other hand, the sum (\ref{MorSimpl1}) then vanishes (we will refer to this observation as result ($\star$)). Assume in the following that $t\ge 2$. The potential anchor terms are generated by the transformation of the sum $$\frac{1}{2}\sum_{\tiny\begin{array}{c}t_1+t_2=t\\ t_i\neq 0\end{array}}\sum_{\zs\in\op{Sh}(t_1,t_2)}\sum_{j_1}\sum_{j_2}f_{j_1}^{X^{\zs^1}}f_{j_2}^{X^{\zs^2}}m'_2(\xi_{j_1}^{X^{\zs^1}},\xi_{j_2}^{X^{\zs^2}})\;.$$If the total degree of $X_{\zs_1},\ldots,X_{\zs_{t_1}}$ and the total degree of $X_{\zs_{t_1+1}},\ldots,X_{\zs_{t_1+t_2}}$ differ both from $-1$, no anchor terms appear. Otherwise, $t_1=1$ (and $t_2=t-1$) or $t_2=1$ (and $t_1=t-1$). These possibilities correspond to different terms in the sum over $t_1,t_2$ if and only if $t\ge 3.$

Let now $t\ge 3$. In view of what has been said, additional terms appear only in the two mentioned cases. They are given by $$\frac{1}{2}2\sum_{i,j,\ell}\lp f_j^{X_i}(r'(\xi_j^{X_i})f_{\ell}^{X_{\hat\imath}})\xi_{\ell}^{X_{\hat\imath}}+f_{\ell}^{X_{\hat\imath}}(r'(\xi_{\ell}^{X_{\hat\imath}})f_j^{X_i})\xi_j^{X_i}\rp=\sum_{i,j,\ell} f_j^{X_i}(r'(\xi_j^{X_i})f_{\ell}^{X_{\hat\imath}})\xi_{\ell}^{X_{\hat\imath}}$$ \be\label{MorSimpl3}=\sum_{i,\ell} (r'(\zf'_1\circ X_i)f_{\ell}^{X_{\hat\imath}})\xi_{\ell}^{X_{\hat\imath}}\;,\ee since the degree of $\xi_{\ell}^{X_{\hat\imath}}$ is $<-1$.

If $t=2$, the sum over $t_1,t_2$ contains a unique term $t_1=t_2=1$ and the anchor terms (although possibly zero) read \be\label{MorSimpl4}\frac{1}{2}2\sum_{j,\ell}\lp f_j^{X_1}(r'(\xi_j^{X_1})f_{\ell}^{X_2})\xi_{\ell}^{X_2}+
f_j^{X_2}(r'(\xi_j^{X_2})f_{\ell}^{X_1})\xi_{\ell}^{X_1}\rp$$ $$=\sum_{\ell}\lp (r'(\zf'_1\circ X_1)f_{\ell}^{X_2})\xi_{\ell}^{X_2}+(r'(\zf'_1\circ X_2)f_{\ell}^{X_1})\xi_{\ell}^{X_1}\rp$$ $$=\sum_{i,\ell} (r'(\zf'_1\circ X_i)f_{\ell}^{X_{\hat\imath}})\xi_{\ell}^{X_{\hat\imath}}\;.\ee

Since the sums (\ref{MorSimpl1}) and (\ref{MorSimpl3}) or (\ref{MorSimpl4}) cancel out (see also ($\star$)), the simplified form of the algebroid morphism condition (\ref{LadMorCond2}) follows. Hence, the next reformulation.

\begin{defi}\label{BasePresLieMorphDef} Let $sE$ and $sF$ be two Lie $n$-algebroids over a same base. A \emph{base-preserving morphism of Lie $n$-algebroids} between $sE$ and $sF$ is a family $\zf_r:\boxdot^rsE\to sF$, $1\le r\le n$, of degree $1-r$ vector bundle morphisms (over the identity) that verify the condition \be\label{LadMorCondSimplified1}r'\circ\zf'_1=\zr'\;,\ee as well as, for any $1\le t\le n+1$ and any homogeneous sections $X_i$ of $E$, $i\in\{1,\ldots,t\}$, the condition $$\sum_{r+s=t+1}\sum_{\zs\in\op{Sh}(s,r-1)}\ze(\zs)\zf'_r\circ\lp\ell'_s(X_{\zs_1},\ldots,X_{\zs_s})\odot X_{\zs_{s+1}}\odot\ldots\odot X_{\zs_t}\rp$$ \be\label{LadMorCondSimplified2}=\sum_{r=1}^t\frac{1}{r!}\sum_{\tiny\begin{array}{c}t_1+\ldots+t_r=t\\t_j\neq 0\end{array}} \;m'_r\lp(\zf'_{t_1}\odot\ldots\odot\zf'_{t_r})\circ(X_1,\ldots,X_t)\rp\;.\ee \end{defi}

\noindent{\bf Remark 10.} Remember that a Lie $n$-algebroid over a point is exactly a Lie $n$-algebra, hence a truncated Lie infinity algebra.

When rewriting the condition (\ref{LadMorCondSimplified2}) in terms of $\zf_r,\ell_r,$ and $m_r$, we obtain $$\sum_{r+s=t+1}\sum_{\zs\in\op{Sh}(s,r-1)}(-1)^{s(r-1)}\op{sign}\zs\,\ze(\zs)\zf_r\circ \lp\ell_s(Y_{\zs_1},\ldots,Y_{\zs_s}),Y_{\zs_{s+1}},\ldots,Y_{\zs_t}\rp$$ \be\sum_{r=1}^t\frac{1}{r!}\sum_{\tiny\begin{array}{c}t_1+\ldots+t_r=t\\t_j\neq 0\end{array}}\sum_{\zs\in\op{Sh}(t_1,\ldots,t_r)}\pm \op{sign}\zs\,\ze(\zs)\,m_r(\zf_{t_1}\circ Y^{\zs^1},\ldots,\zf_{t_r}\circ Y^{\zs^r})\;,\label{LieInftyAlgMorph}\ee where we wrote $Y_i$ instead of $sX_i$ and where $$\pm =(-1)^{r(r-1)/2+\sum_jt_j(r-j)}+\sum_j|Y^{\zs^j}|(r-j+t_{j+1}+\ldots+t_r)\;,$$ $|Y^{\zs^j}|$ being the sum of the degrees of the components of $Y^{\zs^j}$. This is exactly the Lie infinity algebra morphism condition, see \cite{Sch04}, \cite{AP10}, \cite{LV11}. Hence, the definition of base-preserving Lie $n$-algebroid morphisms coincides over a point (bundles become spaces, bundle morphisms become linear maps, anchors vanish, sections become vectors and compositions evaluations) with the definition of (truncated) Lie infinity algebra morphisms. We thus prove a result conjectured in \cite{SZ11}, Remark 2.5.\medskip

\subsection{Categories of Lie $n$-algebroids and NQ-manifolds: comparison of morphisms}

In this section we show that morphisms of split Lie $n$-algebroids are morphisms of NQ-manifolds between split NQ-manifolds.

\begin{prop}\label{NManMorph} There is a 1-to-1 correspondence between families $\zf_r:\boxdot^rsE\to sF$, $1\le r\le n$, of degree $1-r$ vector bundle morphisms over a map $\zf_0$, and graded algebra morphisms $\Phi:\zG(\odot F^*)\to \zG(\odot E^*)$.\end{prop}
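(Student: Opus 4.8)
The plan is to read $\Phi$ as the pullback of a morphism of N-manifolds and to peel off its homogeneous components one degree at a time. First I would use the fact that, by definition of the split N-manifold $F[\cdot]$, the algebra $\cA_F=\zG(\odot F^*)$ is, over $\cA_F^0=\Ci(N)$, \emph{freely} generated by the generators $\zG(F^*_{-k})$, $k\in\{1,\ldots,n\}$; hence any graded algebra morphism $\Phi:\zG(\odot F^*)\to\zG(\odot E^*)$ is uniquely determined by, and can be freely prescribed through, its restrictions to $\Ci(N)$ and to each $\zG(F^*_{-k})$. The restriction $\Phi|_{\Ci(N)}:\Ci(N)\to\Ci(M)$ is a unital $\R$-algebra morphism, hence of the form $\zf_0^*$ for a unique smooth map $\zf_0:M\to N$; and multiplicativity of $\Phi$ forces $\Phi(g\,\zw_k)=\zf_0^*(g)\,\Phi(\zw_k)$, so that each $\Phi|_{\zG(F^*_{-k})}$ is $\zf_0^*$-linear.

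Next I would decompose $\Phi\zw_k\in\cA_E^k=\oplus_{r=1}^k{}^r\!\cA_E^k$ along the homological grading of Proposition \ref{BigradDer}, writing $\Phi\zw_k=\sum_{r}\Phi^{k,r}\zw_k$ with $\Phi^{k,r}\zw_k\in{}^r\!\cA_E^k=\zG((\odot^rE^*)^k)$. The heart of the argument is that, for each fixed arity $r$, the $\zf_0^*$-linear family $\{\Phi^{k,r}\}_{r\le k\le n}$ is exactly the transpose-and-pullback of a single degree $0$ vector bundle morphism $\zf'_r:\odot^rE\to F$ over $\zf_0$: a degree $0$ morphism $\zf'_r$ sends $(\odot^rE)_{-k}$ to $F_{-k}$, and its fibrewise transpose, precomposed with $\zf_0^*$, produces precisely a $\zf_0^*$-linear map $\zG(F^*_{-k})\to\zG((\odot^rE^*)^k)$, landing in homological degree $r$ because $\odot^rE^*$ carries exactly $r$ generators. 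That every $\zf_0^*$-linear map $\zG(F^*_{-k})\to\zG((\odot^rE^*)^k)$ arises this way is the Serre--Swan content: such a map factors uniquely through the canonical $\zf_0^*$-linear map $\zG(F^*_{-k})\to\zG(\zf_0^*F^*_{-k})$, and the resulting $\Ci(M)$-linear map of sections of finitely generated projective modules is a genuine bundle morphism $\odot^rE\to\zf_0^*F$, i.e. a morphism $\odot^rE\to F$ over $\zf_0$ (\cite{Nes}[Theo.11.32], \cite{GMS}). Finally I would convert each $\zf'_r$ into the announced $\zf_r:\boxdot^rsE\to sF$ of degree $1-r$ through the shift formulas $\zf_r=s\,\zf'_r(s^{-1})^r$ recalled above; the constraint $1\le r\le n$ is automatic, since $\zf'_r=0$ for $r\ge n+1$, the highest degree in $\odot^rE$ being $-r\le -n-1<-n$.

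It remains to check that these two assignments are mutually inverse, which is a formal consequence of the freeness used in the first step: reconstructing $\Phi$ from $(\zf_0,\{\zf'_r\})$ by prescribing $\Phi\zw_k=\sum_r(\zf'_r)^T\!\circ\zf_0^*\zw_k$ on generators and extending as a graded algebra morphism returns the original $\Phi$, and conversely extraction recovers $(\zf_0,\{\zf'_r\})$. I expect the main obstacle to be the $\zf_0^*$-linear Serre--Swan step together with the attendant degree bookkeeping: one must argue cleanly that a map of section modules over \emph{different} bases which is merely $\zf_0^*$-linear is pinned down by a bundle morphism over $\zf_0$, and simultaneously track how the single standard degree $k$ of a generator splits over the homological degrees $r$, so that the arity $r$ of $\zf'_r$ is matched with the homological degree $r$ of the image component. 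Everything else---multiplicativity, uniqueness, and the shift dictionary---is routine.
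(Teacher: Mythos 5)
Your proof is correct, and its skeleton is the same as the paper's: recover $\zf_0$ from $\Phi|_{\Ci(N)}$ via the pullback characterization of algebra morphisms $\Ci(N)\to\Ci(M)$, decompose the value of $\Phi$ on a degree-$k$ generator along the homological grading, and identify the arity-$r$ component $\Phi^{k,r}$ with the transpose of a degree $0$ bundle morphism $\zf'_r:\odot^rE\to F$ over $\zf_0$, finally shifting to get $\zf_r$. Where you differ is in the two technical steps. For the direction ``family $\Rightarrow\Phi$'', the paper does not extend from generators by abstract freeness but writes the explicit closed formula (\ref{DefPhi}), $(\Phi^{k,r}\zh_{k,s})(X_1,\ldots,X_r)=\la\zh_{k,s}\circ\zf_0,\frac{1}{s!}\sum(\zf'_{r_1}\odot\ldots\odot\zf'_{r_s})\circ(X_1,\ldots,X_r)\ra$, on \emph{all} of $\zG(\odot F^*)$ and checks multiplicativity via the cohomomorphism formula (\ref{CohomoWedge}); your appeal to the universal property of $\odot_{\Ci(N)}\zG(F^*)$ gives existence and uniqueness more cheaply, but note that the explicit formula is what is actually used later in the proof of Theorem \ref{LadMorTheo}, so you would still need to derive it from your abstract extension. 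For the direction ``$\Phi\Rightarrow$ family'', the paper avoids Serre--Swan and the base-change isomorphism $\Ci(M)\otimes_{\Ci(N)}\zG(F^*_{-k})\simeq\zG(\zf_0^*F^*_{-k})$ by proving directly, with bump functions, that $(\Phi\zh)_x$ depends only on $\zh_{\zf_0(x)}$, and then defining $\zf'_r$ fibrewise by $\zf'_r(p)(q^*)=\la p,(\Phi\zh)_x\ra$; this is more elementary and pointwise, whereas your module-theoretic route is cleaner conceptually but leans on the (true, standard, yet unproved here) identification of sections of a pullback bundle with the extension of scalars. Both arguments establish the bijection; yours trades explicitness for conceptual economy.
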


\newcommand{\ol}{\odot\ldots\odot}

\begin{proof} To define $\Phi:\zG(\odot F^*)\to \zG(\odot E^*)$, we define, for $\zh_{k,s}\in\zG(F^*_{-b_1}\ol F^*_{-b_s})$, $\sum b_j=k$, the projection of $\Phi^{k,r}\zh_{k,s}$ onto any $\zG(E^*_{-a_1}\ol E^*_{-a_r})$, $\sum a_j=k$. More precisely, we define $(\Phi^{k,r}\zh_{k,s})(X_1,\ldots,X_r)\in\Ci(M)$, $X_j\in \zG(E_{-a_j})$, in a way such that the dependence on the $X_j$ be $\Ci(M)$-multilinear and graded symmetric.

We first set \be\label{DefPhi0}\Phi^{0,0}:\Ci(N)\ni g\mapsto g\circ\zf_0\in\Ci(M)\;.\ee Then, for $k\ge 1$, we define $(\Phi^{k,r}\zh_{k,s})(X_1,\ldots,X_r)$ by $0$, if $s> r$, and set, for $s\le r$, \be\label{DefPhi}(\Phi^{k,r}\zh_{k,s})(X_1,\ldots,X_r)=\la\zh_{k,s}\circ\zf_0,\frac{1}{s!}\sum_{\tiny\begin{array}{c}r_1+\ldots+r_s=r\\r_i\neq 0\end{array}}(\zf'_{r_1}\ol\zf'_{r_s})\circ(X_1,\ldots,X_r)\ra\;.\ee Indeed, for any $x\in M$, we have $$\frac{1}{s!}\sum_{\tiny\begin{array}{c}r_1+\ldots+r_s=r\\r_i\neq 0\end{array}}(\zf'_{r_1}\ol\zf'_{r_s})(X_{1,x},\ldots,X_{r,x})$$ \be =\frac{1}{s!}\sum_{\tiny\begin{array}{c}r_1+\ldots+r_s=r\\r_i\neq 0\end{array}}\sum_{\zs\in\op{Sh}(r_1,\ldots,r_s)}\ze(\zs)\;\zf'_{r_1}(X^{\zs^1}_x)\ol\zf'_{r_s}(X^{\zs^s}_x)\;,\label{Argument}\ee where a notation as $X^{\zs^1}_x$ means $X_{\zs_1,x},\ldots,X_{\zs_{r_1},x}$. If we denote the sum of the degrees $-a_{\zs_j}$ of these $X_{\zs_j,x}$ by $|X^{\zs^1}_x|$, we get $$\zf'_{r_1}(X^{\zs^1}_x)\ol\zf'_{r_s}(X^{\zs^s}_x)\in  F_{|X^{\zs^1}_x|,\zf_0(x)}\ol F_{|X^{\zs^s}_x|,\zf_0(x)}\;.$$ On the other hand, $\zh_{k,s;\zf_0(x)}$ is an element of $(F_{-b_1;\zf_0(x)}\ol F_{-b_s;\zf_0(x)})^*$. Of course, the contraction of the terms of the {\small RHS} of (\ref{Argument}) with $\zh_{k,s;\zf_0(x)}$ gives a nonzero contribution only if the considered term belongs to the source space of $\zh_{k,s;\zf_0(x)}$. It is now clear that the {\small RHS} of (\ref{DefPhi}) is a function on $M$ that depends on the $X_j$ in a $\Ci(M)$-multilinear and graded symmetric way.

The definition of $\Phi:\zG(\odot F^*)\to \zG(\odot E^*)$ is now complete. In view of (\ref{CohomoWedge}) and (\ref{DefPhi0}), (\ref{DefPhi}), $\Phi$ is a graded algebra ({\small GA}) morphism.\medskip

\noindent{\bf Remark 11.} It is easily checked that, for $n=1$, $E=TM$, $F=TN$ and $\zf_1=T\zf_0$, the algebra morphism $\Phi$ is just the pullback $\zf_0^*:\zG(\w T^*N)\to \zG(\w T^*M)$ of differential forms by $\zf_0$.\medskip

{\it Proof (continuation).} Conversely, to any {\small GA} morphism $\Phi:\zG(\odot F^*)\to \zG(\odot E^*)$ we can associate a family $\zf'_r:\odot^rE\to F$, $r\ge 1$, of graded vector bundle morphisms over a map $\zf_0$.

The map $\Phi$ is in particular an associative algebra morphism $\Phi:\Ci(N)\to\Ci(M)$. Hence, it is the pullback by a smooth map $\zf_0:M\to N$, see e.g. \cite{AMR83}, \cite{Bkouche65}. It follows that, for any $g\in\Ci(N)$ and $\zh\in\zG(\odot F^*)$, $$\Phi(g\zh)=(g\circ\zf_0)(\Phi \zh)\;.$$ This `function-linearity' implies as usual that $\Phi$ is local, i.e. that $\Phi \zh=0$ on $\zf_0^{-1}(V)$, if $\zh=0$ on $V$, where $V$ is an open subset of $N$ (indeed, for any $x\in\zf_0^{-1}(V)$, consider a bump function $\za$ around $\zf_0(x)$, and note that $\Phi \zh=\Phi((1-\za)\,\zh)$). In fact, for any $x\in M$, we even have $(\Phi \zh)_x=0$, if $\zh_{\zf_0(x)}=0$ (indeed, take a local frame $(b_i)_i$ of $\odot F^*$ in $V\ni\zf_0(x)$ and set $\zh=\sum_is^ib_i$ in $V$; if the bump function $\za$ is as above and has value 1 in $W\ni\zf_0(x)$, then $\zh=\sum_i(\za s^i)(\za b_i)$ in $W$; due to locality, $(\Phi \zh)_x=\sum_i(\za s^i)_{\zf_0(x)}(\Phi(\za b_i))_x=0$): the value $(\Phi\zh)_x$, $x\in M,$ only depends on the value $\zh_{\zf_0(x)}$.

To define, for $r\ge 1$ and $x\in M,$ a linear map $\zf'_r:\odot^rE_x\to F_{\zf_0(x)}$ of degree 0, associate to any $p\in E_{-a_1,x}\odot\ldots\odot E_{-a_r,x}\subset \odot^rE_x$, $\sum a_j=k,$ a unique $$\zf'_r(p)\in F_{-k,\zf_0(x)}\simeq (F_{-k,\zf_0(x)}^*)^*\;.$$ Hence, let $q^*\in F_{-k,\zf_0(x)}^*$ and choose $\zh\in \zG(F^*_{-k})$ such that $\zh_{\zf_0(x)}=q^*.$ The value $(\Phi\zh)_x$ is well-defined in $\odot E^*_x$ and has degree $k$. It suffices now to set \be\label{DefZf}\zf'_r(p)(q^*)=\la p,(\Phi\zh)_x\ra\in\R\;,\ee where of course only the projection of $(\Phi\zh)_x$ onto $E^*_{-a_1,x}\odot\ldots\odot E^*_{-a_r,x}$ gives a nonzero contribution.\medskip

The definitions (\ref{DefPhi}) and (\ref{DefZf}) are in fact inverses of each other. Indeed, if $p=X_{1,x}\odot\ldots\odot X_{r,x}$, $X_{j,x}\in E_{-a_j,x}$, choose $X_j\in\zG(E_{-a_j})$ (resp., $\zh_{k,1}\in\zG(F^*_{-k})$) that extends $X_{j,x}$ (resp., $q^*$). Definition (\ref{DefZf}) then reads $$\zf'_r(X_{1,x}\odot\ldots\odot X_{r,x})(\zh_{k,1;\zf_0(x)})=\la X_{1,x}\odot\ldots\odot X_{r,x},(\Phi^{k,r}\zh_{k,1})_x\ra\;.$$ \end{proof}

\begin{cor} There is a 1:1 correspondence between graded vector bundle morphisms $\zf':E\to F$ and bigraded algebra morphisms $\Phi:\zG(\odot F^*)\to \zG(\odot E^*)$, i.e. algebra morphisms that respect the standard and the homological degrees.\end{cor}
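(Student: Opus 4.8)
The plan is to obtain this Corollary as the restriction of Proposition \ref{NManMorph} to the special families in which only the first component survives. A single graded vector bundle morphism $\zf':E\to F$ of degree $0$ is precisely the datum of a family $(\zf_r)_{1\le r\le n}$ with $\zf'_1=\zf'$ and $\zf'_r=0$ for $r\ge 2$ (equivalently $\zf_1=s\,\zf'\,s^{-1}$ and $\zf_r=0$ for $r\ge 2$), over the base map $\zf_0$ underlying $\zf'$. By Proposition \ref{NManMorph}, families $(\zf_r)_r$ and graded algebra morphisms $\Phi:\zG(\odot F^*)\to\zG(\odot E^*)$ correspond bijectively, and the two assignments (\ref{DefPhi}) and (\ref{DefZf}) are mutually inverse. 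Since every $\Phi$ built from (\ref{DefPhi0}) and (\ref{DefPhi}) already preserves the standard degree $k$ (its corestriction $\Phi^{k,r}$ maps $\zh_{k,s}$ into $\zG(E^*_{-a_1}\odot\ldots\odot E^*_{-a_r})$ with $\sum a_j=k$), it remains only to identify the extra requirement that $\Phi$ respect the homological degree $s$ with the vanishing $\zf'_r=0$ for $r\ge 2$.

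First I would treat the forward direction. Assume $\zf'_r=0$ for all $r\ge 2$, so that only $\zf'_1$ is nonzero. In the defining formula (\ref{DefPhi}), the map $(\zf'_{r_1}\odot\ldots\odot\zf'_{r_s})\circ(X_1,\ldots,X_r)$ runs over partitions $r_1+\ldots+r_s=r$ with all $r_i\ge 1$; with only $\zf'_1$ surviving, such a graded symmetric product is zero unless every $r_i=1$, which forces $r=s$ and leaves the single partition $(1,\ldots,1)$. Hence $\Phi^{k,r}\zh_{k,s}=0$ whenever $r\ne s$, so $\Phi$ sends $^s\!\zG(\odot F^*)$ into $^s\!\zG(\odot E^*)$ and therefore preserves the homological degree. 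Together with the automatic preservation of the standard degree, this shows that $\Phi$ is a bigraded algebra morphism.

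For the converse I would use the inversion formula (\ref{DefZf}) from the proof of Proposition \ref{NManMorph}, which recovers the component $\zf'_r$ from the corestriction $\Phi^{k,r}$ evaluated on elements $\zh_{k,1}$ of homological degree $1$, namely $\zf'_r(X_{1,x}\odot\ldots\odot X_{r,x})(\zh_{k,1;\zf_0(x)})=\la X_{1,x}\odot\ldots\odot X_{r,x},(\Phi^{k,r}\zh_{k,1})_x\ra$. If $\Phi$ is bigraded, then $\Phi^{k,r}\zh_{k,1}=0$ for every $r\ne 1$, whence $\zf'_r=0$ for $r\ge 2$ and only the single graded vector bundle morphism $\zf'_1:E\to F$ remains. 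Because the two assignments of Proposition \ref{NManMorph} are mutually inverse, their restrictions to these matching subclasses are mutually inverse as well, yielding the desired 1:1 correspondence.

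The statement is thus essentially a matter of bookkeeping between the two gradings: the only genuine point is the observation that, once all higher corestrictions vanish, the partition sum in (\ref{DefPhi}) collapses to the single partition $(1,\ldots,1)$ and forces $r=s$. I expect no serious obstacle beyond tracking the standard and homological degrees simultaneously; indeed, in this bigraded case $\Phi$ is nothing but the multiplicative extension to $\zG(\odot F^*)$ of the dual bundle map $(\zf')^*:F^*\to E^*$, reducing on $\Ci(N)$ to the pullback $\zf_0^*$, in agreement with Remark 11.
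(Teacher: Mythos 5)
Your argument is correct and follows exactly the route the paper takes: the corollary is read off from Proposition \ref{NManMorph} by observing that, via (\ref{DefPhi}), a family with only $\zf'_1$ nonzero collapses the partition sum and yields a bigraded $\Phi$, while conversely (\ref{DefZf}) applied to a bigraded $\Phi$ produces only $\zf'_1$. Your write-up merely spells out the bookkeeping that the paper's two-line proof leaves implicit.
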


\noindent{\bf Remark 12.} We thus recover the result that the morphisms of split N-manifolds are the morphisms of graded vector bundles. Let us stress that the morphisms $\Phi:\zG(\odot F^*)\to \zG(\odot E^*)$ of graded algebras we considered in Proposition \ref{NManMorph}, are the morphisms of N-manifolds between the split N-manifolds $E[\cdot]$ and $F[\cdot]$ (split N-manifolds are not a full subcategory of N-manifolds).

\begin{proof} The corollary is a direct consequence of the proof of the preceding proposition. Indeed, if $\zf'_1$ is the unique map of the family of morphisms, it follows from Definition (\ref{DefPhi}) that $\Phi$ respects both degrees. Conversely, if $\Phi$ is a bigraded algebra morphism, Equation (\ref{DefZf}) provides only a map $\zf'_1$.\end{proof}

The next theorem explains our definition of Lie $n$-algebroid morphisms.

\begin{theo}\label{LadMorTheo} There is a 1-to-1 correspondence between morphisms of split Lie $n$-algebroids from $sE$ to $sF$ and morphisms of differential graded algebras from $(\zG(\odot F^*),Q_F)$ to $(\zG(\odot E^*),Q_E)$.\end{theo}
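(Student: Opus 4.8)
The plan is to leverage Proposition \ref{NManMorph}, which already establishes a bijection between families $\zf_r:\boxdot^rsE\to sF$ of degree $1-r$ vector bundle morphisms over a base map $\zf_0$ and graded algebra morphisms $\Phi:\zG(\odot F^*)\to \zG(\odot E^*)$. Since a morphism of differential graded algebras is precisely a graded algebra morphism $\Phi$ that additionally intertwines the differentials, i.e. $\Phi\circ Q_F=Q_E\circ\Phi$, the whole theorem reduces to showing that, under the correspondence of Proposition \ref{NManMorph}, the compatibility condition $\Phi\circ Q_F=Q_E\circ\Phi$ is \emph{equivalent} to the Lie $n$-algebroid morphism conditions (\ref{LadMorCond1}) and (\ref{LadMorCond2}) of Definition \ref{LadMorDef}. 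Here $Q_E$ and $Q_F$ are the Chevalley--Eilenberg differentials attached to the two algebroid structures via Definition \ref{LieNAldCE}, whose existence and homological-vector-field property were guaranteed by Theorem \ref{MTheo2}.

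First I would fix the combinatorial bookkeeping. Both $Q_E,Q_F$ and $\Phi$ are coalgebra-type maps determined by their corestrictions/components (via (\ref{CoDerWedge}) and (\ref{CohomoWedge})), so the equation $\Phi\circ Q_F=Q_E\circ\Phi$ between maps $\zG(\odot F^*)\to\zG(\odot E^*)$ may be checked componentwise: it suffices to verify it on generators $\zh_k\in\zG(F^*_{-k})$ and to compare, for each bidegree $(k{+}1,t)$, the projections of both sides onto $\,^t\!\cA^{k+1}$ of the source N-manifold $E$. The strategy is then to evaluate both composites on arbitrary homogeneous sections $X_1,\ldots,X_t$ of $E$ with $\sum a_j=k+1$ and to contract against $\zh_k$. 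On one side, $Q_E\circ\Phi$ produces, through the explicit formula (\ref{LieNAldCEOper}) for $Q_E$ together with the definition (\ref{DefPhi}) of $\Phi$, exactly the terms of the left-hand side of (\ref{LadMorCond2}): the $\ell'_s$-term comes from the $\ell'_{r-s+1}\odot\op{id}_{s-1}$ piece of $Q_E$, while the $\zr'$-piece of $Q_E$ produces the anchor-derivative terms $(\zr'(X_i)f_j^{X_{\hat\imath}})\xi_j^{X_{\hat\imath}}\circ\zf_0$, using the decomposition (\ref{PhiDecom}). On the other side, $\Phi\circ Q_F$ yields, through the cohomomorphism expansion (\ref{CohomoWedge}) and formula (\ref{PhiDecomp}) for the graded symmetric product of bundle morphisms, exactly the right-hand side of (\ref{LadMorCond2}) built from $m'_r$. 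Matching the two bidegree-$(k{+}1,t)$ projections gives (\ref{LadMorCond2}), while comparing the lowest piece $k=0$, $t=1$ (where $Q_F$ acts on $\Ci(N)$ via (\ref{DefQ1}) and $\Phi$ restricts to the pullback $\zf_0^*$) reproduces the anchor condition (\ref{LadMorCond1}) via the chain-rule computation already displayed in (\ref{AnchorDecomp}).

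The converse direction is then formal once the equivalence of the two families of equations is set up, because each step of the computation above is reversible: given a Lie $n$-algebroid morphism, the induced $\Phi$ from Proposition \ref{NManMorph} satisfies the componentwise identities just derived, hence commutes with the differentials and is a {\small DGA} morphism; given a {\small DGA} morphism, its underlying graded algebra morphism corresponds to a family $\zf_r$ by Proposition \ref{NManMorph}, and the differential-compatibility forces conditions (\ref{LadMorCond1})--(\ref{LadMorCond2}). I would therefore organize the proof as: (i) reduce to the componentwise identity $\Phi\circ Q_F=Q_E\circ\Phi$ on generators; (ii) expand $Q_E\circ\Phi$ and identify it with the {\small LHS} of (\ref{LadMorCond2}); (iii) expand $\Phi\circ Q_F$ and identify it with the {\small RHS}; (iv) treat the degree-$0$ piece separately to recover (\ref{LadMorCond1}).

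The main obstacle will be step (iii), namely the precise sign and multiplicity bookkeeping in matching $\Phi\circ Q_F$ with the right-hand side of (\ref{LadMorCond2}). The codifferential $Q_F$ carries the $\ell'$-type operator on the $F$-side, and pushing it through the cohomomorphism $\Phi$ mixes the partition sum $\sum_{t_1+\ldots+t_r=t}\frac{1}{s!}$ appearing in (\ref{CohomoWedge}) with the shuffle signs $\ze(\zs)$ of (\ref{WedgeHom}) and (\ref{PhiDecomp}); reconciling these with the single $\frac{1}{r!}$ prefactor and the Koszul signs on the {\small RHS} of (\ref{LadMorCond2}) is exactly the kind of delicate, error-prone computation emphasized in Proposition \ref{LSVLinfty}, where the authors warn that the suspension signs cannot be transferred carelessly. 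A parallel subtlety is the $r=2$/$t=2$ anchor term, where $m'_2$ is only $\Ci$-bilinear modulo an anchor correction; the Leibniz failure of $m'_2$ must combine correctly with the $\zr'$-piece of $Q_E$ and the anchor-derivative terms in (\ref{LadMorCond2}), exactly as in the cancellation already exploited in passing from Definition \ref{LadMorDef} to Definition \ref{BasePresLieMorphDef}. I expect the cleanest route through this obstacle is to first establish the identity in the anchor-free part (where $\zr'=r'=0$ and everything is a pure coalgebra computation) and then to isolate and separately match the anchor contributions, reusing the bilinearity computation $r'\circ\zf'_1=T\zf_0\circ\zr'$ of (\ref{AnchorDecomp}).
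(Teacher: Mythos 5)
Your proposal follows essentially the same route as the paper's proof: reduce via Proposition \ref{NManMorph} to showing that the equivariance condition $Q_E\circ\Phi=\Phi\circ Q_F$ checked on the generators $g\in\Ci(N)$ and $\zh_{k,1}\in\zG(F^*_{-k})$ is equivalent to (\ref{LadMorCond1}) and (\ref{LadMorCond2}), with the function case yielding the anchor condition via (\ref{AnchorDecomp}) and the degree-$k$ case yielding the bracket condition after the anchor-correction terms coming from the non-$\Ci$-bilinearity of $m'_2$ cancel against the $\zr'$-derivative terms. You also correctly identify the two delicate points the paper itself isolates --- the $t_1=1$/$t_2=1$ anchor terms in the partition sum and the need to invoke the already-established anchor condition when rewriting $\zr'(X_i)\zf_0^*\la\zh_{k,1},\xi_j^{X_{\hat\imath}}\ra$ --- so the plan is sound.
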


\noindent{\bf Remark 13.} This theorem means that the morphisms between the split Lie $n$-algebroids $(sE,\ell_i,\zr)$ and $(sF,m_i,r)$ are the morphisms of NQ-manifolds between the split NQ-manifolds $(E[\cdot],Q_E)$ and $(F[\cdot],Q_F)$. The point is that morphisms of split Lie $n$-algebroids are not necessarily morphisms of graded vector bundles. This observation is not surprising: morphisms of Lie infinity algebras are on their part usually not morphisms of graded vector spaces.\medskip

Let us first note that, in view of Proposition \ref{NManMorph}, Theorem \ref{LadMorTheo} just means that the morphism conditions (\ref{LadMorCond1}) and (\ref{LadMorCond2}) are equivalent to the equivariance condition \be\label{EquivCond}Q_E\circ\Phi=\Phi\circ Q_F\;\ee -- which proves that Definition \ref{LadMorDef} is independent of the chosen decompositions. More precisely, the equivariance condition is satisfied on the whole algebra $\zG(\odot F^*)$ if and only if it is satisfied on the generators $g\in\Ci(N)$ and $\zh_{k,1}\in\zG(F^*_{-k})$, $k\in\{1,\ldots,n\}$. It will turn out that the condition (\ref{EquivCond}) written on functions is equivalent to the condition (\ref{LadMorCond1}), and that (\ref{EquivCond}) written on generators of degrees $k\in\{1,\ldots,n\}$ is equivalent to the conditions (\ref{LadMorCond2}).\medskip

\begin{proof} To simplify, we work in this proof up to sign. However, some signs are needed to explain Definition \ref{LadMorDef}. We denote them by $(\pm_1) - (\pm_3)$ and write them explicitly at the end of the proof.\medskip

Let $g\in\Ci(N)$ and let $X\in\zG(E_{-1})$. We get $$(Q_E^{1,1}\Phi^{0,0} g)(X)=\zr'(X)(\zf_0^*g)\;$$and $$(\Phi^{1,1} Q_F^{1,1}g)(X)=\la (Q_F^{1,1}g)\circ\zf_0,\zf'_1\circ X\ra=\sum_jf^X_j\la Q_F^{1,1}g,\xi_j^X\ra\circ\zf_0=\sum_jf^X_j\zf_0^*(r'(\xi_j^X)g)\;.$$In view of Equation (\ref{AnchorDecomp}), this means that $Q_E\circ\Phi$ and $\Phi\circ Q_F$ coincide on functions if and only if Equation (\ref{LadMorCond1}) holds true.

Let now $k\in\{1,\ldots,n\}$, $\zh_{k,1}\in\zG(F^*_{-k})$, and  $t\in\{1,\ldots,k+1\}$. We will compute \be\label{LadMorC20}\sum_{r=1}^{k+1}Q_E^{k+1,t}\Phi^{k,r}\zh_{k,1}-\sum_{r=1}^{k+1}\Phi^{k+1,t} Q_F^{k+1,r}\zh_{k,1}\in\,^t\!\cA^{k+1}\;\ee on $(X_1,\ldots, X_t)$, $X_j\in\zG(E_{-a_j})$, $\sum a_j=k+1$.

When applying the definitions of $Q_E$ and $\Phi$, we get $$\sum_{r=1}^{k+1}(Q_E^{k+1,t}\Phi^{k,r}\zh_{k,1})(X_1,\ldots,X_t)$$ $$=\sum_{r=1}^t \sum_{\zs\in\op{Sh}(t-r+1,r-1)}(\Phi^{k,r}\zh_{k,1})(\ell'_{t-r+1}(X_{\zs_1},\ldots,X_{\zs_{t-r+1}})\odot X_{\zs_{t-r+2}}\odot\ldots\odot X_{\zs_t})$$ $$+\sum_{i}\zr'(X_i) (\Phi^{k,t-1}\zh_{k,1})(X_1,\ldots\hat\imath\ldots,X_t)$$ \be\label{LadMorC21}=\la \zh_{k,1}\circ\zf_0,\sum_{r+s=t+1}\sum_{\zs\in\op{Sh}(s,r-1)}(\pm_1)\,\zf'_r\circ(\ell'_{s}(X_{\zs_1},\ldots,X_{\zs_{s}})\odot X_{\zs_{s+1}}\odot\ldots\odot X_{\zs_t})\ra\ee $$+\sum_{i,j}\zr'(X_i)\lp f_j^{X_{\hat \imath}}\la\zh_{k,1}\circ\zf_0,\xi_j^{X_{\hat \imath}}\circ\zf_0\ra\rp\;,$$where $X_{\hat\imath}$ stands for $(X_1,\ldots\hat\imath\ldots,X_t)$. The last sum reads \be\label{LadMorC22}\la\zh_{k,1}\circ\zf_0,\sum_{i,j}(\pm_2)\,(\zr'(X_i) f_j^{X_{\hat \imath}})\xi_j^{X_{\hat \imath}}\circ\zf_0\ra\ee $$+\sum_{i,j} f_j^{X_{\hat \imath}}\zr'(X_i)\zf_0^*\la\zh_{k,1},\xi_j^{X_{\hat \imath}}\ra\;.$$Observe that, independently of the implication we have in mind, (\ref{LadMorCond1}) and (\ref{LadMorCond2}) imply (\ref{EquivCond}) or (\ref{EquivCond}) implies (\ref{LadMorCond1}) and (\ref{LadMorCond2}), we can assume at this stage that (\ref{LadMorCond1}) and its equivalent form (\ref{AnchorDecomp}) hold true. It follows that the last sum of the preceding expression can be written in the form \be\label{LadMorC23}\sum_{i,j,\ell} f_{\ell}^{X_i}f_j^{X_{\hat \imath}}\zf_0^*\lp r'(\xi_{\ell}^{X_i})\la\zh_{k,1},\xi_j^{X_{\hat \imath}}\ra\rp\;.\ee The reader has probably noticed that many of the terms we write and transform are zero. The point is that it is much easier to transform sums with potentially vanishing terms, than to work with the actually present terms.

On the other hand, when using Equation (\ref{PhiDecomp}), we get $$\sum_{r=1}^{k+1}(\Phi^{k+1,t} Q_F^{k+1,r}\zh_{k,1})(X_1,\ldots,X_t)$$ $$=\sum_{r=1}^t\frac{1}{r!}\sum_{\tiny\begin{array}{c}\tiny t_1+\ldots+t_r=t\\ t_i\neq 0\end{array}}\sum_{\zs\in\op{Sh}(t_1,\ldots,t_r)}\sum_{j_1}\ldots\sum_{j_r}f_{j_1}^{X^{\zs^1}}\ldots f_{j_r}^{X^{\zs^r}}\zf_0^*\la Q_F^{k+1,r}\zh_{k,1}, \xi_{j_1}^{X^{\zs^1}}\odot\ldots\odot\xi_{j_r}^{X^{\zs^r}}\ra$$ \be\label{LadMorC24}=\la\zh_{k,1}\circ\zf_0,\sum_{r=1}^t\frac{1}{r!}\sum_{\tiny\begin{array}{c}\tiny t_1+\ldots+t_r=t\\ t_i\neq 0\end{array}}\sum_{\zs\in\op{Sh}(t_1,\ldots,t_r)}\sum_{j_1}\ldots\sum_{j_r}(\pm_3)\,f_{j_1}^{X^{\zs^1}}\ldots f_{j_r}^{X^{\zs^r}}m'_r(\xi_{j_1}^{X^{\zs^1}},\ldots,\xi_{j_r}^{X^{\zs^r}})\circ\zf_0\ra\ee $$+\frac{1}{2}\sum_{\tiny\begin{array}{c}\tiny t_1+t_2=t\\ t_i\neq 0\end{array}}\sum_{\zs\in\op{Sh}(t_1,t_2)}\sum_{j_1}\sum_{j_2}f_{j_1}^{X^{\zs^1}}f_{j_2}^{X^{\zs^2}}\zf_0^*\lp(r'\odot
\zh_{k,1})(\xi_{j_1}^{X^{\zs^1}},\xi_{j_2}^{X^{\zs^2}})\rp\;.$$We now examine the nonzero terms in the sum over $t_1,t_2$. Note first that if $t=1$, the entire sum vanishes. We thus can assume that $t\ge 2.$ The function that we pull back by $\zf_0$ is given by $$r'(\xi_{j_1}^{X^{\zs^1}})\la\zh_{k,1},\xi_{j_2}^{X^{\zs^2}}\ra+r'(\xi_{j_2}^{X^{\zs^2}})\la\zh_{k,1},\xi_{j_1}^{X^{\zs^1}}\ra$$and does therefore not vanish only if the sum of the degrees of $X_{\zs_1},\ldots,X_{\zs_{t_1}}$ or $X_{\zs_{t_1+1}},\ldots,X_{\zs_{t_1+t_2}}$ is $-1$. In this case, $t_1=1$ (and $t_2=t-1$) or $t_2=1$ (and $t_1=t-1$). As already observed above, the latter two possibilities correspond to different terms in the sum over $t_1, t_2$, only if $t\neq 2$. Moreover, since the sum of the degrees of $X_1,\ldots,X_t$ is $-k-1$, see above, we find that $t=2$, if $k=1$.

Consider first the case $t\neq 2$ (then $k\neq 1$). The sum over $t_1, t_2$ now reads $$\frac{1}{2}2\sum_i\sum_{j_1}\sum_{j_2}f_{j_1}^{X_i}f_{j_2}^{X_{\hat\imath}}\zf_0^*\lp r'(\xi_{j_1}^{X_i})\la\zh_{k,1},\xi_{j_2}^{X_{\hat\imath}}\ra+r'(\xi_{j_2}^{X_{\hat\imath}})\la\zh_{k,1},\xi_{j_1}^{X_i}\ra\rp$$ \be\label{LadMorC25}=\sum_{i,j,\ell}f_{\ell}^{X_i}f_{j}^{X_{\hat\imath}}\zf_0^*\lp r'(\xi_{\ell}^{X_i})\la\zh_{k,1},\xi_{j}^{X_{\hat\imath}}\ra\rp\;,\ee since the degree of $\xi_{j_2}^{X_{\hat\imath}}$, i.e. the degree of $X_{\hat\imath}$, is less than $-1$.

In case $t=2$, the sum over $t_1,t_2$ reads $$\frac{1}{2}2 \sum_{j,\ell}f_{\ell}^{X_1}f_j^{X_2}\zf_0^*\lp r'(\xi_{\ell}^{X_1})\la \zh_{k,1},\xi_j^{X_2}\ra +r'(\xi_j^{X_2})\la \zh_{k,1},\xi_{\ell}^{X_1}\ra\rp$$ \be\label{LadMorC26}=\sum_{i,j,\ell}f_{\ell}^{X_i}f_{j}^{X_{\hat\imath}}\zf_0^*\lp r'(\xi_{\ell}^{X_i})\la\zh_{k,1},\xi_{j}^{X_{\hat\imath}}\ra\rp\;.\ee It now suffices to observe that the sum (\ref{LadMorC23}) and the sum (\ref{LadMorC25}) or (\ref{LadMorC26}) cancel out. Indeed, if the morphism condition (\ref{LadMorCond2}) is satisfied, the difference (\ref{LadMorC20}) vanishes and $Q_E\circ \Phi-\Phi\circ Q_F$ vanishes on all generators. Conversely, if the difference (\ref{LadMorC20}) vanishes, the `sum' of the evaluations (\ref{LadMorC21}), (\ref{LadMorC22}), and (\ref{LadMorC24}) vanishes at any point $x\in M$. As the second factor of each one of these evaluations is an element of $F_{-k,\zf_0(x)}$ and the first an arbitrary element $\zh_{k,1;\zf_0(x)}\in F^*_{-k,\zf_0(x)}$, this means that the condition (\ref{LadMorCond2}) is verified. \end{proof}

\noindent{\bf Addendum.} It is straightforwardly checked that the signs $(\pm_1) - (\pm_3)$ are given by $$(\pm_1)=(-1)^k\ze(\zs), (\pm_2)=(-1)^{\tilde X_i(\tilde X_1+\ldots +\tilde X_{i-1}+k)+1},\;\text{ and }\;(\pm_3)=(-1)^k\ze(\zs)\;,$$ which completes the explanation of Definition \ref{LadMorDef}.

\thebibliography{Dillo99}

\bibitem[AC09]{AC09} C. A. Abad, M. Crainic, Representations up to homotopy of Lie algebroids, to appear in J. Reine Angew. Math.

\bibitem[AKSZ97]{AKSZ97} M. Alexandrov, M. Kontsevich, A. Schwarz, O. Zaboronsky, The geometry of the master equation and topological quantum field theory, Internat. J. Modern Phys. A, 12(7) (1997), {1405-1429}

\bibitem[AP10]{AP10} M. Ammar, N. Poncin, Coalgebraic approach to the {L}oday infinity category, stem differential for {$2n$}-ary graded and homotopy algebras, Ann. Inst. Fourier (Grenoble), 60(1) (2010), {355-387}

\bibitem[AMR83]{AMR83}
R. Abraham, J.E. Marsden, and T. Ratiu, {Manifolds, tensor
analysis, and applications}, {Global Analysis Pure and Applied: Series B}, {2}, {Addison-Wesley Publishing Co., Reading, Mass.}, {1983}, ISBN {0-201-10168-8}

\bibitem[Bat80]{Bat80} M. Batchelor, Two Approaches to Supermanifolds, Trans. Amer. Math. Soc., 258(1) (1980), 257-270

\bibitem[BC04]{BC04} J. C. Baez, A. S. Crans, Higher-dimensional algebra. {VI}. {L}ie 2-algebras, Theory Appl. Categ., 12 (2004), 492-538

\bibitem[Bko65]{Bkouche65} R. Bkouche, Id\'eaux mous d'un anneau commutatif,
Applications aux anneaux de fonctions, {C.R. Acad. Sci. Paris},
{260} (1965), 6496-6498

\bibitem[BKS04]{BKS} M. Bojowald, A. Kotov, T. Strobl, Lie
algebroid morphisms, Poisson sigma models, and off-sheff closed
gauge symmetries, {J. Geom. Phys.}, {54(4)} ({2005}), {400-426}

\bibitem[CM08]{CM08} M. Crainic, I. Moerdijk, Deformations of {L}ie brackets: cohomological aspects, J. Eur. Math. Soc., {10}(4) ({2008}), {1037-1059}

\bibitem[Ger63]{Ger63} M. Gerstenhaber, The cohomology structure of an associative ring, Ann. of Math., 78 (1963), 267-288

\bibitem[GMS05]{GMS} G. Giachetta, L. Mangiarotti, G. Sardanashvily, Geometric and Algebraic Topological Methods in Quantum Mechanics (2005), World Scientific, ISBN 9812561293

\bibitem[Gra03]{Gra03} J. Grabowski, Quasi-derivations and {QD}-algebroids, Rep. Math. Phys., {52(3)} (2003), {445-451}

\bibitem[HM90]{HM90} P.J. Higgins, K.C.H. Mackenzie, Algebraic constructions in the category of Lie algebroids, J. Alg., 129 (1990), 194-230

\bibitem[GKP11]{GKP11} J. Grabowski, D. Khudaverdyan, N. Poncin, Loday algebroids and their supergeometric interpretation, arXiv: 1103.5852

\bibitem[KMP11]{KMP11} D. Khudaverdyan, A. Mandal, N. Poncin, Higher categorified algebras versus bounded homotopy algebras, Theory Appl. Categ., {25}(10) (2011), 251-275

\bibitem[LV11]{LV11} J.-L. Loday, B. Valette, Algebraic Operads, Draft

\bibitem[LS93]{LS93} T. Lada, J. Stasheff, Introduction to SH Lie algebras for physicists, Internat. J. Theoret. Phys., 32(7) (1993), 1087-1103

\bibitem[Mac05]{Mac05} K. C. H. Mackenzie, General Theory of Lie Groupoids and Lie Algebroids, London Mathematical Society, Lecture Note Series 213 (2005), Cambridge University Press, ISBN 0-521-49928-3

\bibitem[Man88]{Man88} Y. Manin, Gauge field theory and complex geometry, Grundlehren der Mathematischen Wissenschaften, {289}, {Springer-Verlag}, {Berlin}, {1988}, ISBN {3-540-18275-6}

\bibitem[Nes03]{Nes} J. Nestruev, Smooth manifolds and observables, Graduate texts in mathematics, 220 (2003), Springer-Verlag, ISBN 0-387-95543-7

\bibitem[Roy02]{Roy02} D. Roytenberg, On the structure of graded symplectic supermanifolds and {C}ourant
algebroids, Contemp. Math., {315} (2002), 169-185

\bibitem[SSS07]{SSS} H. Sati, U. Schreiber, J. Stasheff, {{$L_\infty$}-algebra connections and applications to {S}tring- and {C}hern-{S}imons {$n$}-transport}, {Quantum field theory}, {303-424}, {Birkh\"auser}, {Basel}, {2009}

\bibitem[Sch04]{Sch04} F. Schuhmacher, {D}eformation of ${L}_{\infty}$-algebras, arXiv: math/0405485

\bibitem[Sev01]{Sev01} P. {\v{S}}evera, {Some title containing the words ``homotopy'' and ``symplectic'', e.g. this one}, {Trav. Math., XVI}, {Univ. Luxemb., Luxembourg}, {2005}

\bibitem[SZ11]{SZ11} Y. Sheng, C. Zhu, Higher extensions of Lie algebroids and applications to Courant algebroids, arXiv: 11035920v2

\bibitem[Sho08]{BS} B. Shoikhet, An explicit construction of the
Quillen homotopical category of dg Lie algebras, arXiv: 0706.1333

\bibitem[Vit12]{Vit12} L. Vitagliano, On the Strong Homotopy Lie-Rinehart Algebra of a Foliation, arXiv: 1204.2467

\bibitem[Vor05]{Vor05} T. Voronov, Higher derived brackets for arbitrary
derivations, Travaux math\'ematiques, {XVI} (2005), {163-186}

\bibitem[Vor10]{Vor10} T. Voronov, {$Q$}-manifolds and higher analogs of {L}ie
algebroids, {X{XIX} {W}orkshop on {G}eometric {M}ethods in
{P}hysics}, {AIP Conf. Proc.}, {1307} (2010), {191-202}, {Amer.
Inst. Phys.}, {Melville, NY}

\vskip1cm

\noindent Giuseppe BONAVOLONTA \\University of
Luxembourg\\ Campus Kirchberg, Mathematics Research Unit\\ 6, rue R. Coudenhove-Kalergi, L-1359 Luxembourg
City, Grand-Duchy of Luxembourg
\\Email: giuseppe.bonavolonta@uni.lu \\

\noindent Norbert PONCIN\\University of Luxembourg\\
Campus Kirchberg, Mathematics Research Unit\\ 6, rue R. Coudenhove-Kalergi, L-1359 Luxembourg
City, Grand-Duchy of Luxembourg\\Email: norbert.poncin@uni.lu

\end{document}